\def\g{\mathfrak{g}}
\def\P{{\sf P}}
\def\I{{\cal I}}
\def\S{{\cal R}}
\def\T{{\cal T}}
\def\ZZ{{\mathbb Z}}
\def\EE{E}
\newcommand{\bone}{{\bf 1}}
\renewcommand{\Pr}[1]{{\sf Pr}\left(#1\right)}
\newtheorem{theorem}{Theorem}[section]
\newtheorem{lemma}[theorem]{Lemma}
\newtheorem{definition}[theorem]{Definition}
\newtheorem{remark}[theorem]{Remark}
\title{How Long Does it Take to Catch a Wild Kangaroo?}
\author{Ravi Montenegro \and Prasad Tetali}
\author{
Ravi Montenegro
\thanks{Department of Mathematical Sciences, University of Massachusetts Lowell, Lowell, MA 01854, USA.
Email: ravi\_montenegro@uml.edu;
part of this work was done while the author was at The Tokyo Institute of Technology.
}
\and
Prasad Tetali
\thanks{School of Mathematics and College of Computing, Georgia Institute of Technology, Atlanta, GA 30332, USA.
Email: tetali@math.gatech.edu;
research supported in part by NSF grants DMS 0401239, 0701043.}
}
\begin{document}

\maketitle

\begin{abstract}
\noindent
We develop probabilistic tools for upper and lower bounding the expected time until two independent random walks on $\ZZ$ intersect each other.
This leads to the first sharp analysis of a non-trivial Birthday attack,
proving that Pollard's Kangaroo method solves the discrete logarithm problem $g^x=h$ on a cyclic group in expected time
$(2+o(1))\sqrt{b-a}$ for an average $x\in[a,b]$.
Our methods also resolve a conjecture of Pollard's, by showing that the same bound holds when step sizes are generalized from powers of $2$ to powers of any fixed $n$.
\end{abstract}

\section{Introduction}

Probabilistic ``paradoxes'' can have unexpected applications in computational problems,
but mathematical tools often do not exist to prove the reliability of the resulting computations, so instead practitioners have to rely on heuristics, intuition and experience.
A case in point is the Kruskal Count, a probabilistic concept discovered by Martin Kruskal and popularized in a card trick by Martin Gardner, which exploits the property that for many Markov chains on $\ZZ$ independent walks will intersect fairly quickly when started at nearby states.
In a 1978 paper John Pollard applied the same trick to a mathematical problem related to code breaking, the Discrete Logarithm Problem: solve for the exponent $x$, given the generator $g$ of a cyclic group $G$
and an element $h\in G$ such that $g^x=h$.

Pollard's Kangaroo method is based on running two independent random walks on a cyclic group $G$, one starting at a known state (the ``tame kangaroo'') and the other starting at the unknown but nearby value of the discrete logarithm $x$ (the ``wild kangaroo''), and terminates after the first intersection of the walks.
As such, in order to analyze the algorithm it suffices to develop probabilistic tools for examining the expected time until independent random walks on a cyclic group intersect, in terms of some measure of the initial distance between the walks.

Past work on problems related to the Kruskal Count seem to be of little help here.
Pollard's argument of \cite{Pol00.1} gives rigorous results for specific values of $(b-a)$, but the recurrence relations he uses can only be solved on a case-by-case basis by numerical computation.
Lagarias et.al. \cite{LRV09.1} used probabilistic methods to study the {\em distance traveled} before two walks intersect, but only for walks in which the number of steps until an intersection was simple to bound.
Although our approach here borrows a few concepts from the study of  the Rho algorithm in \cite{KMPT07.1}, such as examining the expected number of intersections and some measure of its variance,
a significant complication in studying this algorithm is that when $b-a\ll|G|$ the kangaroos will have proceeded only a small way around the cyclic group before the algorithm terminates.
As such, mixing time is no longer a useful notion, and instead a notion of convergence is required which occurs long before the mixing time.
The tools developed here to avoid this problem may prove of independent interest when examining other pre-mixing properties of Markov chains.

The key probabilistic results required are upper and lower bounds on expected time until intersection of independent walks on $\ZZ$ started from nearby states.
In the specific case of the walk involved in the Kangaroo method these bounds are equal, and so the lead constants are sharp, which is quite rare among the analysis of algorithms based on Markov chains.
More specifically we have:

\begin{theorem} \label{thm:main}
Suppose $g,h\in G$ are such that $h=g^x$ for some $x\in[a,b]$.
If $x$ is a uniform random integer in $[a,b]$ then the expected number of group operations required by the Distinguished Points implementation of Pollard's Kangaroo method is
$$
(2+o(1))\sqrt{b-a}\,.
$$
The expected number of group operations is maximized when $x=a$ or $x=b$, at
$$
(3+o(1))\sqrt{b-a}
$$
\end{theorem}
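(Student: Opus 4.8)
The plan is to reduce the running time of the algorithm to a clean intersection problem for two independent walks on $\ZZ$, invoke the paper's matching upper and lower bounds on expected intersection time, and then optimize the one free parameter (the mean jump size) while averaging over the unknown exponent. First I would set up the reduction. Identifying a group element $g^e$ with its exponent $e$, a kangaroo is a walk on $\ZZ$ whose increments are (through the pseudorandom jump function, idealized as i.i.d.) drawn from the Kangaroo step distribution supported on powers of $n$, with some mean $m$. Place the tame kangaroo at the midpoint $c=(a+b)/2$ and the wild kangaroo at $x$, so their initial separation is $D=|x-c|\le (b-a)/2$. The decisive structural fact is that the jump function depends only on position, so as soon as the two trails share a single site the walks coalesce; hence the capture event is exactly the first time the sets of visited sites intersect, and in the Distinguished Points implementation this is detected with only an additive $o(\sqrt{b-a})$ overhead once the DP spacing is taken $\ll\sqrt{b-a}$. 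The number of group operations is then twice the number of steps each walk takes until this intersection.

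Next I would quote the promised upper and lower bounds on the expected intersection time for walks started at separation $D$, which for the Kangaroo step distribution coincide to leading order. The governing heuristic these bounds make rigorous is a renewal estimate: each walk visits a far site with asymptotic density $1/m$, so over the length-$(tm-D)$ overlap of the two trails after $t$ steps the expected number of site coincidences is $\approx (tm-D)/m^2$, and the first intersection is, up to lower-order terms, Poisson-governed by this mean. Integrating the tail $\PP(\text{no coincidence by step }t)\approx e^{-(tm-D)/m^2}$ yields expected steps per walk $\frac{D}{m}+m+o(\sqrt{b-a})$, hence expected group operations $2\big(\frac{D}{m}+m\big)(1+o(1))$, uniformly in $D\in[0,(b-a)/2]$.

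Finally I would optimize and average. Since $x$ uniform on $[a,b]$ makes $D=|x-c|$ uniform on $[0,(b-a)/2]$ with $\EE[D]=(b-a)/4$, minimizing $2(\EE[D]/m+m)$ over $m$ gives the optimal choice $m=\tfrac12\sqrt{b-a}$, at which the average cost is $(2+o(1))\sqrt{b-a}$. Because the per-$D$ cost is increasing (indeed affine) in $D$, it is maximized at $D=(b-a)/2$, that is at $x\in\{a,b\}$, where substituting the same $m$ gives $(3+o(1))\sqrt{b-a}$. Thus both clean constants fall out of the single choice $m=\tfrac12\sqrt{b-a}$, which simultaneously optimizes the average and pins down the endpoints.

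The hard part will be the rigor behind the quoted intersection bounds in the pre-mixing regime: because $b-a\ll|G|$ the walks explore only a tiny arc of the group, so mixing-time arguments are unavailable and one must instead establish the renewal density $1/m$ and the near-independence of distinct coincidences directly, controlling a second moment so as to raise the lower bound to match the first-moment upper bound and obtain the sharp constants. The delicate arithmetic point, and the content of the resolution of Pollard's conjecture, is that this density and the resulting constant are unchanged when the step set is generalized from powers of $2$ to powers of an arbitrary fixed $n$; here I would need the relevant lattice and equidistribution properties of sums of such steps, together with tail bounds that are uniform in $D$ so as to justify interchanging the expectation with the limit when integrating over $x$.
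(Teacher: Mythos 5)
Your skeleton coincides with the paper's: reduce to two independent increasing walks on $\ZZ$, prove matching upper and lower bounds of the form $(1+o(1))\left(D/\bar S+\bar S\right)$ on the expected number of steps until the trails first intersect, then set $\bar S=\frac12\sqrt{b-a}$, average $D=|x-(a+b)/2|$ over uniform $x$ to get $2\sqrt{b-a}$, and take $D=(b-a)/2$ for the worst case $3\sqrt{b-a}$. The reduction, the optimization, and the arithmetic are all correct, and the distinguished-points overhead is handled the same way.

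However, the two items you defer to ``the hard part'' are the entire mathematical content of the theorem, and the proposal offers no mechanism for either. (i) The renewal density: the classical renewal theorem gives $\Pr{\exists j:\,Y_j=y}\to 1/\bar S$ only asymptotically and without a rate, and the walk never mixes since $b-a\ll|G|$; what is needed is that the density equals $1/\bar S$ up to a $1\pm o(1)$ factor within $o(\sqrt{b-a})$ steps, uniformly over starting offsets. The paper's key device is a randomized stopping time: run a lazy copy of the tame walk until every generator $2^k$ with $k<d$ has been selected, note that the lazy/non-lazy choices encode a uniformly random offset $\delta\in\{0,\dots,2^d-1\}$, and accept the stopping time with probability $\sum_{s>\delta}p(s)$; conditional on acceptance the hitting probability of \emph{every} site past $\tilde Y_{\T}-\delta$ is exactly $1/\bar S$, and the stopping time is reached in $O((d+1)^5)=o(\sqrt{b-a})$ steps. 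Nothing in your plan produces a statement of this kind. (ii) The second moment: to lift the first-moment upper bound on the collision probability to a matching lower bound you must show that collisions do not cluster. The paper quantifies this by $B_{\epsilon}$, the expected number of collisions within one stopping-time window started from coincident walks, and bounds it by $O(1/(d+1))$ using the combinatorial estimate $\max_{u,v}\P^i(u,v)=O((d+1)^{-i})$ (counting ordered representations of an integer as a sum of $i$ powers of $2$) together with a Hoeffding tail for the far range; the block decomposition of the paper's Theorem \ref{thm:birthday} then converts these two ingredients into the sharp two-sided bound. Your exponential tail $e^{-(t\bar S-D)/\bar S^2}$ is precisely the Poisson heuristic this machinery exists to justify, so as written the proposal is a careful restatement of Pollard's heuristic rather than a proof.
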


Pollard \cite{Pol00.1} previously gave a convincing but not completely rigorous argument for the first bound,
while the second was known only by a rough heuristic.
Given the practical significance of Pollard's Kangaroo method for solving the discrete logarithm problem, we find it surprising that there has been no fully rigorous analysis of this algorithm, particularly since it has been 30 years since it was first proposed in \cite{Pol78.1}.

The paper proceeds as follows.
A general framework for analyzing intersection of independent walks on the integers is constructed in Section \ref{sec:collision}.
This is followed in Section \ref{sec:prelim} by a detailed description of the Kangaroo method,
with analysis in Section \ref{sec:kangaroo}.
The paper finishes in Section \ref{sec:generalize} with an extension of the results to more general step sizes, resolving a conjecture of Pollard's.


\section{Uniform Intersection Time and a Collision Bound} \label{sec:collision}

Given two independent instances $X_i$ and $Y_j$ of a Markov Chain on $\ZZ$, started at nearby states $X_0$ and $Y_0$ (as made precise below), we consider the expected number of steps required by the walks until they first intersect.
Observe that if the walk is increasing, i.e. $\P(u,v)>0$ only if $v>u$, then to examine the number of steps required by the $X_i$ walk it suffices to let $Y_j$ proceed an infinite number of steps and then evolve $X_i$ until $X_i=Y_j$ for some $i,j$.
Thus, rather than considering a specific probability $\Pr{X_i=Y_j}$ it is better to look at $\Pr{\exists j:\,X_i=Y_j}$.
By symmetry, the same approach will also bound the expected number of steps required by $Y_j$ before it reaches a state visited by the $X_i$ walk.

First, however, because the walk is not ergodic then
alternate notions resembling mixing time and a stationary distribution will be required.
Heuristic suggests that after some warm-up period the $X_i$ walk will be sufficiently randomized that at each subsequent step the probability of colliding with the $Y_j$ walk is roughly the inverse of the average step size.
Our replacement for mixing time will measure the number of steps required for this to become a rigorous statement:

\begin{definition}
A {\em stopping time} for a random walk $\{X_i\}_{i=0}^{\infty}$ is a random variable $T\in{\mathbb N}$ such that the event $\{T=t\}$ depends only on $X_0,\,X_1,\,\ldots,\,X_t$.
The average time until stopping is $\overline{T}=\EE T$.
\end{definition}

\begin{definition} \label{def:uniform-time}
Consider a Markov chain $\P$ on an infinite group $G$.
A {\em nearly uniform intersection time} $T(\epsilon)$ is a stopping time such that for some $U>0$ and $\epsilon\geq 0$ the relation
$$
(1-\epsilon)U \leq \Pr{\exists j:\,X_{T(\epsilon)+\Delta}=Y_j} \leq (1+\epsilon)U
$$
holds for every $\Delta\geq 0$ and every $(X_0,Y_0)$ in a designated set of initial states $\Omega\subset G\times G$.
\end{definition}

In general the probability that two walks will ever intersect may go to zero in the limit.
However, if a walk is {\em transitive} on $\ZZ$ (i.e. $\P(u,v)=\P(0,v-u)$), {\em increasing} (i.e. $\P(u,v)>0$ only when $v>u$), and {\em aperiodic} (i.e. $gcd\{k:\,\P(0,k)>0\}=1$), 
then one out of every $\bar{S} = \sum_{k=1}^\infty k\P(0,k)$ states is visited and a stopping time will exist satisfying
$$
\frac{1-\epsilon}{\bar{S}} \leq \Pr{\exists j:\,X_{T(\epsilon)+\Delta}=Y_j} \leq \frac{1+\epsilon}{\bar{S}}\,.
$$
An obvious choice of starting states are all $Y_0\leq X_0$, but for reasons that will be apparent later it better serves our purposes to expand to the case of $Y_0<X_0+S_{max}$, where $S_{max}=\max_{s\in S} s$ is the largest step size.
By transitivity and since no intersection can occur until the first time $Y_j\geq X_0$ then it actually suffices to verify for the case $X_0=0 \leq Y_0 < S_{max}$.

A natural approach to studying collisions is to consider an appropriate random variable counting the number of intersections of the two walks.
Towards this, let $S_N$ denote the number of times the $X_i$ walk intersects the $Y_j$ walk in the first $N$ steps, i.e.
$$
S_N = \sum_{i=0}^N \bone_{\{\exists j : \,X_i=Y_j\}}\,.
$$

If one intersection is unlikely to be followed soon by others then $\Pr{S_N>0}\approx \EE(S_N)$.
To measure the gap between the two quantities, let $B_{\epsilon}$ be the worst-case expected number of collisions between two independent walks before the nearly uniform intersection time $T(\epsilon)$. To be precise:
$$
B_{\epsilon} = \max_{Y_0 < X_0+S_{max}} E \sum_{i=1}^{T(\epsilon)} \bone_{\{\exists j : \,X_i=Y_j\}}
$$

The main result of this section bounds the expected number of steps until a collision.

\begin{theorem}\label{thm:birthday}
Given an increasing transitive Markov chain on $\ZZ$, if two independent walks have starting states with $Y_0<X_0+S_{max}$ then
\begin{eqnarray*}
\EE \min\{i>0:\,\exists j,\,X_i=Y_j\}
      &\leq& 1+\left(\frac{\sqrt{\bar{S}(1+B_{\epsilon})}+\sqrt{\overline{T(\epsilon)}}}{1-\epsilon}\right)^2 \\
\EE \min\{i>0:\,\exists j,\,X_i=Y_j\}
      &\geq& 1+\bar{S}\,\frac{\left(\max\{0,\,1-\sqrt{B_{\epsilon}}\}\right)^2}{1+\epsilon}
\end{eqnarray*}
\end{theorem}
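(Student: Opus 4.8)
The plan is to reduce the expected first-collision time to first- and second-moment estimates of the collision counter $S_N$, exploiting that the event $\{\min\{i>0:\exists j,\,X_i=Y_j\}>N\}$ is exactly the event that none of the steps $1,\dots,N$ is a collision, i.e.\ (up to the $i=0$ term) the event $\{S_N=0\}$. Writing $\tau=\min\{i>0:\exists j,\,X_i=Y_j\}$, the deterministic bound $\tau\ge 1$ accounts for the leading $1$ in both estimates, so it suffices to control $\tau-1$. The two directions then split cleanly: an \emph{upper} bound on $\EE\tau$ follows from a \emph{lower} bound on the collision probability $\Pr{S_N\ge 1}$ via the second-moment method, while a \emph{lower} bound on $\EE\tau$ follows from an \emph{upper} bound on $\Pr{S_N\ge 1}$, obtained from the first moment together with a clustering (renewal) argument.

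First I would establish two-sided control of $\EE S_N$. Splitting the run at the nearly uniform intersection time $T(\epsilon)$, the collisions at steps $1,\dots,T(\epsilon)$ contribute at most $B_\epsilon$ in expectation by definition of $B_\epsilon$, while each step $T(\epsilon)+\Delta$ contributes a collision with probability in $[(1-\epsilon)/\bar{S},\,(1+\epsilon)/\bar{S}]$ by Definition~\ref{def:uniform-time} (here $U=1/\bar{S}$ for the increasing aperiodic transitive walk). Absorbing the random stopping time via the tower property and optional stopping then yields bounds of the shape $(1-\epsilon)(N-\overline{T(\epsilon)})/\bar{S}\le \EE S_N\le (1+\epsilon)N/\bar{S}+B_\epsilon$, uniformly over valid starts $Y_0<X_0+S_{max}$.

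The crux is the second moment $\EE S_N^2=\EE S_N+2\sum_{i<i'}\Pr{\text{collision at }i\text{ and }i'}$. For each pair I would condition on the earlier collision at step $i$ and invoke the strong Markov property: since the walk is increasing and a collision aligns $X_i$ with a point of the $Y$-set, transitivity lets me translate so that $X_i$ is the origin, placing the configuration into the valid class with $Y_0<X_0+S_{max}$. The expected number of subsequent collisions within the next $T(\epsilon)$ steps is then at most $B_\epsilon$, while later collisions revert to the near-uniform rate $U$; combining these gives $\var S_N\le (1+B_\epsilon)\,\EE S_N$. This is the main obstacle: the correlation sum must be controlled \emph{only} through $B_\epsilon$ and the uniform rate, and the random, possibly unbounded, warm-up time $T(\epsilon)$ must be absorbed without spoiling the $(1\pm\epsilon)$ factors.

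For the upper bound, Cauchy--Schwarz (Paley--Zygmund) gives $\Pr{S_N\ge 1}\ge(\EE S_N)^2/\EE S_N^2\ge \EE S_N/(\EE S_N+1+B_\epsilon)$. Because a no-collision state after $N$ steps is again a valid start (the increasing walk leaves $X_N$ strictly between two consecutive $Y$-points, a gap $<S_{max}$), transitivity justifies restarting, so $\EE\tau\le N/\Pr{S_N\ge 1}\le N\,(1+(1+B_\epsilon)/\EE S_N)$; inserting the lower bound on $\EE S_N$ and minimizing over the window length $N$ balances the two competing terms by AM--GM, producing exactly $(\sqrt{\bar{S}(1+B_\epsilon)}+\sqrt{\overline{T(\epsilon)}})^2/(1-\epsilon)^2$. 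For the lower bound I would instead group the collisions into clusters, each initiated by a ``leader'' and containing at most $1+B_\epsilon$ collisions in expectation; by transitivity and the strong Markov property the inter-leader gaps behave like independent copies of $\tau$, so Wald's identity applied to the number of leaders before time $N$ (a stopping time for the gap sequence) gives $\EE\tau\ge N/(\EE S_N+1)$, and letting $N\to\infty$ with $\EE S_N\le (1+\epsilon)N/\bar{S}+B_\epsilon$ yields $\bar{S}(\max\{0,1-\sqrt{B_\epsilon}\})^2/(1+\epsilon)$, the $\sqrt{B_\epsilon}$ and the truncation at $0$ arising from making the cluster/renewal decomposition rigorous via Cauchy--Schwarz.
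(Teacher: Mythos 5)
Your skeleton (moment estimates for the collision counter plus a restart-over-windows argument) parallels the paper's, but both halves contain genuine gaps, and the lower bound is actually broken.

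The lower bound is the fatal problem. Your chain --- $\EE\tau\ge N/(\EE S_N+1)$ together with $\EE S_N\le(1+\epsilon)N/\bar{S}+B_{\epsilon}$, then $N\to\infty$ --- yields $\EE\tau\ge\bar{S}/(1+\epsilon)$ with \emph{no} degradation in $B_{\epsilon}$ at all; the closing appeal to Cauchy--Schwarz to ``produce'' the factor $\left(1-\sqrt{B_{\epsilon}}\right)^2$ is not a derivation but reverse-engineering of the answer. Worse, the intermediate conclusion $\EE\tau\ge\bar{S}/(1+\epsilon)$ is false in general, so the Wald/renewal step cannot be right: the theorem must hold for \emph{every} admissible start, including $Y_0=X_0$, and from that start the walks re-collide within $T(\epsilon)$ steps with probability of order $B_{\epsilon}$, so that $\EE\tau$ is roughly $(1-B_{\epsilon})\bar{S}+B_{\epsilon}\,O\bigl(\overline{T(\epsilon)}\bigr)$, which drops below $\bar{S}/(1+\epsilon)$ as soon as $\epsilon$ is small relative to $B_{\epsilon}$ (e.g.\ $\epsilon=0$, $B_0>0$). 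The flaw is the assertion that ``inter-leader gaps behave like independent copies of $\tau$'': after the first cluster, the gaps are first-collision times started from post-collision (aligned) configurations, not from the given start, so a renewal bound controls at best the worst-case start and says nothing about favorable starts --- which is precisely what the $\left(1-\sqrt{B_{\epsilon}}\right)^2$ factor exists to pay for. The paper's mechanism is different: it blocks time into stretches of random length $T_k(\epsilon)+\Delta$, notes that each block contains no collision with probability at least $1-B_{\epsilon}-\frac{\Delta}{\bar{S}}(1+\epsilon)$ (a per-block penalty $B_{\epsilon}$ incurred no matter how small $\Delta$ is, exactly reflecting early clustering), deduces $\EE\tau-1\ge\sum_{k\ge0}\bigl(1-B_{\epsilon}-\frac{\Delta}{\bar{S}}(1+\epsilon)\bigr)^{k+1}\Delta$, and then maximizes over $\Delta$; the optimum $\Delta=\sqrt{B_{\epsilon}}(1-\sqrt{B_{\epsilon}})\bar{S}/(1+\epsilon)$ is what yields $\bar{S}\left(1-\sqrt{B_{\epsilon}}\right)^2/(1+\epsilon)$.

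The upper bound via the second moment is workable in shape but not with your stated lemmas, and not with the theorem's constants. The claim $\var\, S_N\le(1+B_{\epsilon})\EE S_N$ is false as stated: conditioning on a collision at step $i$, the later collisions include not only the $\le B_{\epsilon}$ burst within the next $T(\epsilon)$ steps but also collisions at rate about $(1+\epsilon)/\bar{S}$ over the remaining $N-i$ steps; summing over ordered pairs leaves $2B_{\epsilon}\EE S_N$ (the pair count doubles the cluster term) plus a term of order $\frac{(1+\epsilon)^2}{(1-\epsilon)^2}(\EE S_N)^2$, which exceeds $(\EE S_N)^2$ whenever $\epsilon>0$. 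Consequently Paley--Zygmund can only give $\Pr{S_N>0}\ge \EE S_N/\bigl(1+2B_{\epsilon}+c_{\epsilon}\EE S_N\bigr)$ with $c_{\epsilon}>1$, and after restarting and optimizing the window you obtain a bound with $1+2B_{\epsilon}$ in place of $1+B_{\epsilon}$ and inflated $\epsilon$-factors --- adequate for the application (where $B_{\epsilon}=o(1)$) but not a proof of the stated inequality. The paper avoids the second moment entirely via the exact identity $\Pr{Z>0}=\EE Z/\EE[Z\mid Z>0]$, and bounds $\EE[Z\mid Z>0]\le 1+B_{\epsilon}+\EE[\S_\Delta\mid X_0=Y_0=0]$ by conditioning only on the \emph{first} collision and invoking the increasing property; this is strictly sharper than any second-moment bound and is what produces $1+B_{\epsilon}$. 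A further, fixable, gap: with deterministic windows of length $N$, the estimate $\EE S_N\ge(1-\epsilon)(N-\overline{T(\epsilon)})/\bar{S}$ does not follow from Definition 2.2 by ``optional stopping,'' since that definition controls unconditional probabilities at the times $T(\epsilon)+\Delta$, not probabilities conditioned on events like $\{T(\epsilon)\le N-\Delta\}$; the paper's blocks of random length $T_k(\epsilon)+\Delta$ are designed precisely to sidestep this correlation.
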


In particular, when $\epsilon$ and $B_\epsilon$ are close to zero and $\bar{S}\gg \overline{T(\epsilon)}$ then
$$
\EE \min\{i>0:\,\exists j,\,X_i=Y_j\} \sim \bar{S}\,,
$$
which makes rigorous the heuristic that the expected number of steps needed until a collision is the average step size.

The steps before a nearly uniform intersection time act as a sort of burn-in period, so it will be easier if we discard them in the analysis. As such, let
$$
\S_\Delta =\sum_{i=T(\epsilon)+1}^{T(\epsilon)+\Delta} \bone_{\{\exists j:\,X_i=Y_j\}}\,.
$$

The first step in the proof is to examine the number of collisions after the burn-in:

\begin{lemma} \label{lem:expectations}
Under the conditions of Theorem \ref{thm:birthday}, if $\Delta\geq 0$ then
\begin{eqnarray*}
(1-\epsilon)\,\frac{\Delta}{\bar{S}} \leq& \displaystyle E[\S_\Delta] &\leq (1+\epsilon)\,\frac{\Delta}{\bar{S}} \\
& \displaystyle E[\S_\Delta \mid \S_\Delta>0] &\leq 1 + B_{\epsilon} + E[\S_\Delta\mid X_0=Y_0=0]
\end{eqnarray*}
\end{lemma}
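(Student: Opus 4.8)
The first pair of inequalities is the easy one and follows from linearity of expectation together with Definition \ref{def:uniform-time}. Reindexing the $\Delta$ summands by $k=i-T(\epsilon)$ gives
$$
E[\S_\Delta] = \sum_{k=1}^{\Delta} \Pr{\exists j:\,X_{T(\epsilon)+k}=Y_j}\,,
$$
and each summand is a probability evaluated $k\geq 1$ steps past the nearly uniform intersection time, so in the transitive, increasing, aperiodic setting (where $U=1/\bar{S}$) each term lies between $(1-\epsilon)/\bar{S}$ and $(1+\epsilon)/\bar{S}$. Summing over the $\Delta$ terms yields the two-sided bound. The only point to verify is that the stopping-time nature of $T(\epsilon)$ causes no trouble: one conditions on $\{T(\epsilon)=t\}$, applies the bound to each $X_{t+k}$, and averages over $t$.

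For the second inequality the plan is to peel off the first collision after the burn-in and then restart the process using transitivity and the strong Markov property. Let $\tau=\min\{i>T(\epsilon):\,\exists j,\,X_i=Y_j\}$, which on $\{\S_\Delta>0\}$ satisfies $T(\epsilon)<\tau\leq T(\epsilon)+\Delta$; since the walk is increasing, whether $X_i$ is hit by $Y$ can be decided by running $Y$ only until it exceeds $X_i$, so $\tau$ is a stopping time for the joint walk. Writing $\S_\Delta=\bone_{\{\S_\Delta>0\}}(1+R)$, where $R$ counts the collisions at indices $\tau<i\leq T(\epsilon)+\Delta$, gives
$$
E[\S_\Delta\mid \S_\Delta>0] = 1 + E[R\mid \S_\Delta>0]\,,
$$
so it remains to show $E[R\mid\S_\Delta>0]\leq B_\epsilon + E[\S_\Delta\mid X_0=Y_0=0]$.

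At time $\tau$ both walks occupy the common state $v=X_\tau=Y_{j_0}$, and because the chain is increasing, $X_i>v\geq Y_j$ for every $i>\tau$ and $j\leq j_0$; hence all later matches involve only $Y_j$ with $j>j_0$. Thus $R$ counts exactly the collisions between the two \emph{independent} walks $(X_{\tau+m})_{m\geq 0}$ and $(Y_{j_0+n})_{n\geq 0}$, both issuing from $v$. By transitivity we shift $v$ to $0$, and by the strong Markov property (applied to $X$ at $\tau$ and to $Y$ at $j_0$) this restarted pair is distributed as a fresh pair with $X_0=Y_0=0$, a configuration that does satisfy $Y_0<X_0+S_{max}$ so that $B_\epsilon$ applies. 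Letting $\tilde T(\epsilon)$ denote the nearly uniform intersection time of the restarted pair, I would split $R$ at this new stopping time: the collisions in the burn-in window $\tau<i\leq\tau+\tilde T(\epsilon)$ have conditional expectation at most $B_\epsilon$ by the very definition of $B_\epsilon$, while the collisions beyond it form a quantity of type $\S_{\Delta'}$ for the restarted $(0,0)$ process with $\Delta'=T(\epsilon)+\Delta-\tau-\tilde T(\epsilon)\leq\Delta$ (using $\tau>T(\epsilon)$). Since $E[\S_{\Delta'}\mid X_0=Y_0=0]$ is nondecreasing in $\Delta'$, it is at most $E[\S_\Delta\mid X_0=Y_0=0]$, and adding the two pieces gives the claim.

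The main obstacle is the bookkeeping in this restart step rather than any hard estimate: one must check that conditioning on $\{\S_\Delta>0\}$ and on the time and location of the first collision does not bias the subsequent evolution, which is precisely where the strong Markov property and the independence of the two walks are needed, and that the increasing hypothesis genuinely confines all future matches to $j>j_0$ so that the restarted $Y$ walk really does begin afresh at $v$. Once these are in place, everything else reduces to linearity of expectation, transitivity, and the monotonicity of $\S_{\Delta'}$ in $\Delta'$.
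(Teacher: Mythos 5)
Your proposal is correct and follows essentially the same route as the paper: the first pair of inequalities by linearity of expectation and the definition of the nearly uniform intersection time, and the second by conditioning on the first post-burn-in collision, invoking the increasing property to confine later matches to larger $Y$-indices, restarting from $X_0=Y_0$ via transitivity, and splitting the restarted count into a burn-in piece bounded by $B_\epsilon$ plus a piece bounded by $E[\S_\Delta\mid X_0=Y_0=0]$. Your version merely makes the restart bookkeeping (strong Markov property, monotonicity in $\Delta'$) more explicit than the paper does.
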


\begin{proof}
The expectation $E[\S_\Delta]$ satisfies
\begin{eqnarray*}
E[\S_\Delta]
 &=&    E \sum_{i=1}^{\Delta} \bone_{\{\exists j:\,X_{T(\epsilon)+i}=Y_j\}} \\
 &=&    \sum_{i=1}^{\Delta} \Pr{\exists j:\,X_{T(\epsilon)+i}=Y_j}  \\
 &\geq& \Delta\,\frac{1-\epsilon}{\bar{S}}
\end{eqnarray*}
The upper bound on $E[\S_\Delta]$ follows by taking $(1+\epsilon)$ in place of $(1-\epsilon)$.

Now for $E[\S_\Delta \mid \S_\Delta>0]$.
Observe that if $X_i=Y_j$ and $k>i$ then $X_k=Y_\ell$ can occur only for $\ell>j$, because the $X$ and $Y$ walks are increasing.
Hence, if $\tau=\min\{i>0\,:\,\exists j,\,X_{T(\epsilon)+i}=Y_j\}$ is the time of the first intersection, the number of intersections after time $\tau$ can be found by considering the case $X_0=Y_0$ and then computing the expected number of intersections until $X_{T(\epsilon)+\Delta-i}$.
The total number of intersections is then
\begin{eqnarray*}
\lefteqn{E[\S_\Delta \mid \S_\Delta>0]} \\
 &=& \sum_{i=1}^{\Delta} \Pr{\tau=T(\epsilon)+i}\left(1+E_{X_0=Y_0} \sum_{k=1}^{\Delta-i} \bone_{\{\exists \ell:\,X_k=Y_\ell\}}\right) \\
 &\leq& 1 + E_{X_0=Y_0} \sum_{k=1}^{T(\epsilon)} \bone_{\{\exists \ell:\,X_k=Y_\ell\}} + E_{X_0=Y_0} \sum_{k=T(\epsilon)+1}^{T(\epsilon)+\Delta} \bone_{\{\exists \ell:\,X_k=Y_\ell\}} \\
 &\leq& 1 + B_{\epsilon} + E[\S_\Delta\mid X_0=Y_0=0]
\end{eqnarray*}
\end{proof}

This shows that if $B_{\epsilon}$ is small then one intersection is rarely followed by others, or more rigorously:
\begin{lemma} \label{lem:prob}
Under the conditions of Theorem \ref{thm:birthday}, if $\Delta\geq 0$ then
\begin{eqnarray*}
\Pr{S_{T(\epsilon)+\Delta}>0} &\leq& \frac{\Delta}{\bar{S}}\,(1+\epsilon) + B_{\epsilon} \\
\Pr{S_{T(\epsilon)+\Delta}>0} &\geq& \frac{\Delta}{\bar{S}}\,\frac{(1-\epsilon)^2}{1+B_{\epsilon}+\frac{\Delta}{\bar{S}}}\,.
\end{eqnarray*}
\end{lemma}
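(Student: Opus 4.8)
The plan is to derive both inequalities directly from Lemma \ref{lem:expectations}, combined with elementary first- and (a disguised) second-moment argument, using the decomposition $S_{T(\epsilon)+\Delta}=\sum_{i=0}^{T(\epsilon)}\bone_{\{\exists j:\,X_i=Y_j\}}+\S_\Delta$ that separates the burn-in period from the post-burn-in window counted by $\S_\Delta$.

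For the upper bound I would apply a union bound: the event $\{S_{T(\epsilon)+\Delta}>0\}$ forces either a collision during the burn-in (a nonzero value of $\sum_{i=1}^{T(\epsilon)}\bone_{\{\exists j:\,X_i=Y_j\}}$) or a collision afterward (the event $\{\S_\Delta>0\}$). Markov's inequality bounds the first probability by the expected number of burn-in collisions, which is at most $B_{\epsilon}$ by the very definition of $B_\epsilon$, and bounds the second by $E[\S_\Delta]\le(1+\epsilon)\frac{\Delta}{\bar S}$ from the upper half of Lemma \ref{lem:expectations}. Adding these yields $\Pr{S_{T(\epsilon)+\Delta}>0}\le(1+\epsilon)\frac{\Delta}{\bar S}+B_{\epsilon}$.

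For the lower bound I would first discard the burn-in entirely, noting $\Pr{S_{T(\epsilon)+\Delta}>0}\ge\Pr{\S_\Delta>0}$, and then exploit the identity $\Pr{\S_\Delta>0}=E[\S_\Delta]/E[\S_\Delta\mid\S_\Delta>0]$, valid for any nonnegative random variable by the law of total expectation. The numerator is at least $(1-\epsilon)\frac{\Delta}{\bar S}$, while the second bound of Lemma \ref{lem:expectations} controls the denominator: $E[\S_\Delta\mid\S_\Delta>0]\le 1+B_{\epsilon}+E[\S_\Delta\mid X_0=Y_0=0]\le 1+B_{\epsilon}+(1+\epsilon)\frac{\Delta}{\bar S}$, where the last step reuses the upper bound of Lemma \ref{lem:expectations} applied to the common start $X_0=Y_0=0$. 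What remains is algebraic cleanup: the bound $1+B_{\epsilon}+(1+\epsilon)\frac{\Delta}{\bar S}\le(1+\epsilon)(1+B_{\epsilon}+\frac{\Delta}{\bar S})$ followed by $\frac{1-\epsilon}{1+\epsilon}\ge(1-\epsilon)^2$ (equivalently $1\ge 1-\epsilon^2$) converts the resulting fraction into the stated form $\frac{\Delta}{\bar S}\frac{(1-\epsilon)^2}{1+B_{\epsilon}+\frac{\Delta}{\bar S}}$.

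The conceptual heart is really the denominator bound $E[\S_\Delta\mid\S_\Delta>0]$, which makes precise the slogan ``one collision is rarely followed soon by another''; but that work is already packaged in Lemma \ref{lem:expectations}, where the increasing property is used to decouple the walk after its first intersection. Consequently I expect the only genuine obstacle to be bookkeeping: ensuring the burn-in/post-burn-in split is clean (in particular that the $i=0$ term, which can contribute a collision only in the degenerate case $Y_0=X_0$, is treated consistently with the $i>0$ convention of the hitting time), and checking that the elementary inequality chain loses only the advertised $(1-\epsilon)^2$ factor rather than something weaker.
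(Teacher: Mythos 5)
Your proposal is correct and follows essentially the same route as the paper: the upper bound is Markov's inequality applied to the collision count split into burn-in and post-burn-in pieces (the paper applies $\Pr{Z>0}\leq E[Z]$ to the total count and then splits the expectation, which is the same computation), and the lower bound uses the identity $\Pr{Z>0}=E[Z]/E[Z\mid Z>0]$ with $Z=\S_\Delta$ together with both bounds of Lemma~\ref{lem:expectations}, followed by the same $(1-\epsilon)^2$ algebraic cleanup. No substantive difference from the paper's argument.
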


\begin{proof}
Observe that a random variable $Z\geq 0$ satisfies
\begin{equation}\label{eqn:positive}
\Pr{Z>0} = \frac{E[Z]}{E[Z\mid Z>0]}
\end{equation}
because $E[Z] = \Pr{Z=0}\,E[Z \mid Z=0] + \Pr{Z>0}\,E[Z \mid Z>0 ]$.

For the lower bound let $Z=\S_\Delta$ in \eqref{eqn:positive}, so that
\begin{eqnarray*}
\Pr{S_{T(\epsilon)+\Delta}>0}&\geq&\Pr{\S_\Delta>0}
  \geq \frac{E[\S_\Delta]}{1 + B_{\epsilon} + \max E[\S_\Delta]} \\
  &\geq& \frac{(1-\epsilon)\Delta/\bar{S}}{1 + B_{\epsilon} + (1+\epsilon)\Delta/\bar{S}}
   \geq \frac{1-\epsilon}{1+\epsilon}\,\frac{\Delta/\bar{S}}{\frac{1+B_{\epsilon}}{1+\epsilon}+\Delta/\bar{S}}\,.
\end{eqnarray*}

For the upper bound take $Z=S_{T(\epsilon)+\Delta}$ in \eqref{eqn:positive}, so that
$$
\Pr{S_{T(\epsilon)+\Delta}>0} = \frac{E[S_{T(\epsilon)+\Delta}]}{E[S_{T(\epsilon)+\Delta}\mid S_{T(\epsilon)+\Delta}>0]}\leq E [S_{T(\epsilon)+\Delta}] \,.
$$
Since $Y_0>X_0$ then the expectation $E[S_{T(\epsilon)+\Delta}]$ satisfies
\begin{eqnarray*}
E[S_{T(\epsilon)+\Delta}]
  &=&    E \sum_{i=0}^{T(\epsilon)+\Delta} \bone_{\{\exists j:\,X_i=Y_j\}} \\
  &=& E \sum_{i=1}^{T(\epsilon)} \bone_{\{\exists j:\,X_i=Y_j\}} + \sum_{i=T(\epsilon)+1}^{T(\epsilon)+\Delta} \bone_{\{\exists j:\,X_i=Y_j\}} \\
 &\leq& B_{\epsilon} + \Delta\,\frac{1+\epsilon}{\bar{S}} \,.
\end{eqnarray*}
\end{proof}

\begin{proof}[Proof of Theorem~\ref{thm:birthday}]
The walk will be broken into blocks of length $T(\epsilon)+\Delta$ for some $\Delta$ to be optimized later, overlapping only at the endpoints, and each block analyzed separately.

More formally, inductively define $N_0=0$, let $T_k(\epsilon)$ be the nearly uniform intersection time started at state $X_{N_{k-1}}$, and set $N_k=N_{k-1}+T_k(\epsilon)+\Delta$.
The number of intersections from time $N_k$ to $N_{k+1}$ is
$$
S_{N_k}^{N_{k+1}} = \sum_{i=N_k}^{N_{k+1}} \bone_{\{\exists j : \,X_i=Y_j\}}\,.
$$
By taking $X_0 \leftarrow X_{N_k}$ and $Y_0\leftarrow \min\{Y_j:\, Y_j\geq X_{N_k}$\} then Lemma \ref{lem:prob} implies
$$
\frac{\Delta}{\bar{S}}(1+\epsilon) + B_{\epsilon}
 \geq \Pr{S_{N_k}^{N_{k+1}}>0\,\mid\, S_{N_k}=0}
 \geq \frac{\Delta}{\bar{S}} \frac{(1-\epsilon)^2}{1+B_{\epsilon}+\frac{\Delta}{\bar{S}}}\,.
$$
Since
\begin{eqnarray*}
\Pr{S_{N_\ell}=0}
  &=& \prod_{k=0}^{\ell-1} \Pr{S_{N_k}^{N_{k+1}}=0\mid S_{N_k=0}}
\end{eqnarray*}
then
$$
\left(1-\frac{\Delta}{\bar{S}}\,\frac{(1-\epsilon)^2}{1+B_{\epsilon}+\frac{\Delta}{\bar{S}}}\right)^\ell
  \geq \Pr{S_{N_\ell}=0}
  \geq \left(1-B_{\epsilon}-\frac{\Delta}{\bar{S}}\,(1+\epsilon) \right)^\ell\,.
$$

The blocks will now be combined to prove the theorem.

First, the upper bound.
\begin{eqnarray*}
E \min\{i:\,S_i>0\}-1
    &=& E \sum_{i=0}^\infty \bone_{\{S_i=0\}}-1
     =  \sum_{k=0}^\infty E\sum_{i=N_k+1}^{N_{k+1}} \bone_{\{S_i=0\}} \\
    &=& \sum_{k=0}^\infty \Pr{S_{N_k}=0}\,E\left[\sum_{i=N_k+1}^{N_{k+1}} \bone_{\{S_i=0\}}\,{\Bigr |}\,S_{N_k}=0\right] \\
    &\leq&  \sum_{k=0}^\infty \left(1-\frac{\Delta}{\bar{S}}\,\frac{(1-\epsilon)^2}{1+B_{\epsilon}+\frac{\Delta}{\bar{S}}}\right)^k\,\left(\overline{T(\epsilon)}+\Delta\right) \\
     &=&    \frac{\bar{S}}{\Delta}\,\frac{1+B_{\epsilon}+\frac{\Delta}{\bar{S}}}{(1-\epsilon)^2}\,\left(\overline{T(\epsilon)}+\Delta\right)
\end{eqnarray*}
This is minimized when $\Delta = \sqrt{\bar{S}(1+B_{\epsilon})\overline{T(\epsilon)}}$.

The lower bound is similar.
\begin{eqnarray*}
\lefteqn{ E \min\{i:\,S_i>0\}-1
    =  \sum_{k=0}^\infty E\sum_{i=N_k+1}^{N_{k+1}} \bone_{\{S_i=0\}} } \\
    &\geq& \sum_{k=0}^\infty \Pr{S_{N_{k+1}}=0}\,E\left[\sum_{i=N_k+1}^{N_{k+1}} \bone_{\{S_i=0\}}\,{\Bigr |}\,S_{N_{k+1}}=0\right] \\
    &\geq& \sum_{k=0}^\infty \left(1-B_{\epsilon}-\frac{\Delta}{\bar{S}}\,(1+\epsilon)\right)^{k+1}\,\Delta \\
    &=&    \left(\frac{1}{B_{\epsilon}+\frac{\Delta}{\bar{S}}\,(1+\epsilon)}-1\right)\,\Delta
\end{eqnarray*}
This is maximized when $\Delta=\max\left\{0,\frac{\sqrt{B}(1-\sqrt{B})\bar{S}}{1+\epsilon}\right\}$.
\end{proof}

The following lemma makes it possible to bound $B_{\epsilon}$ given bounds on multi-step transition probabilities.

\begin{lemma} \label{lem:B_epsilon}
If $T(\epsilon)$ is a nearly uniform intersection time then
$$
B_{\epsilon} \leq \sum_{i=1}^M (1+2i)\max_{u,v}\P^i(u,v) + M\,\left(\frac{2(S_{max}/\bar{S})^2}{\bar{S}}(1+\epsilon) + e^{-M}\right)\,.
$$
\end{lemma}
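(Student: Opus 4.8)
The plan is to bound the expected number of burn-in collisions step by step, splitting the $X$-steps into an early range $1\le i\le M$, handled by crude transition-probability estimates, and a late range $M<i\le T(\epsilon)$, handled by the near-uniform guarantee of Definition~\ref{def:uniform-time}. Two elementary facts about increasing transitive walks drive the argument. Because the walk is strictly increasing, each state is visited at most once by the $Y$-walk, so
$$
\Pr{\exists j:\,X_i=Y_j} = \sum_v \P^i(0,v)\,\Pr{v\in\{Y_j\}_j}\,,
$$
and, by independence of the two walks together with $\sum_v\P^j(Y_0,v)=1$, a single pair satisfies $\Pr{X_i=Y_j}=\sum_v\P^i(0,v)\P^j(Y_0,v)\le \max_{u,v}\P^i(u,v)$. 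I would then classify a collision $X_i=Y_j$ as \emph{balanced} when $j\le 2i$ and \emph{unbalanced} otherwise, so that $\bone_{\{\exists j:\,X_i=Y_j\}}\le \bone_{\{\exists j\le 2i:\,X_i=Y_j\}}+\bone_{\{\exists j>2i:\,X_i=Y_j\}}$.

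For the early balanced contribution ($1\le i\le M$) I would union-bound over the at most $1+2i$ admissible values of $j$, each contributing $\max_{u,v}\P^i(u,v)$ by the single-pair estimate above; summing over $i$ produces exactly $\sum_{i=1}^M(1+2i)\max_{u,v}\P^i(u,v)$, which is the first term. The remaining contributions feed into the second term. An unbalanced collision forces the $Y$-walk to have taken more than twice as many steps as the $X$-walk while still being overtaken, a large-deviation event for the difference of the two mean-$\bar{S}$ walks; a Chernoff estimate bounds its per-step probability by $e^{-M}$ once $i>M$. For the late balanced steps I would feed the near-uniform guarantee into the displayed identity: on the already-mixed part of the range $\Pr{v\in\{Y_j\}_j}$ is comparable to its asymptotic value $1/\bar{S}$, but converting the \emph{averaged} guarantee of Definition~\ref{def:uniform-time} into a pointwise per-step bound costs a factor measuring the local spread of a single step, which is where the $2(S_{max}/\bar{S})^2$ enters and yields a per-step bound of $\frac{2(S_{max}/\bar{S})^2}{\bar{S}}(1+\epsilon)$. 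Summing the per-step bound $\frac{2(S_{max}/\bar{S})^2}{\bar{S}}(1+\epsilon)+e^{-M}$ over the remaining steps up to $T(\epsilon)$ then gives the second term, with the prefactor $M$ reflecting a bound on the number of such post-burn-in steps.

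The main obstacle is the late range. The nearly uniform intersection time only controls the \emph{average} collision probability $\Pr{\exists j:\,X_{T(\epsilon)+\Delta}=Y_j}$, whereas here I need a pointwise bound on the renewal density $\Pr{v\in\{Y_j\}_j}$ valid across the whole support of $\P^i(0,\cdot)$; quantifying how much this density can locally exceed $1/\bar{S}$ — and thereby justifying the $(S_{max}/\bar{S})^2$ factor by a local-limit or renewal estimate — together with controlling the number of post-burn-in steps, is the delicate part. The early range, by contrast, reduces to a routine union bound once the single-pair estimate $\Pr{X_i=Y_j}\le\max_{u,v}\P^i(u,v)$ is in hand.
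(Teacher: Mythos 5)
Your first term is recovered correctly: for $i\le M$, the union bound over the $1+2i$ values $j\le 2i$, together with $\Pr{X_i=Y_j}=\sum_v\P^i(X_0,v)\P^j(Y_0,v)\le\max_{u,v}\P^i(u,v)$, gives $\sum_{i=1}^M(1+2i)\max_{u,v}\P^i(u,v)$. But the rest of the plan has a genuine structural gap. Your ``late range'' $M<i\le T(\epsilon)$ cannot be handled by Definition~\ref{def:uniform-time}: that definition constrains the walk only at times $T(\epsilon)+\Delta$ with $\Delta\ge0$, i.e.\ \emph{after} the stopping time, whereas every index appearing in $B_\epsilon$ is at or before $T(\epsilon)$ --- this is precisely the burn-in window whose collisions $B_\epsilon$ measures, so invoking the near-uniform guarantee there is circular. (In fact, in the intended reading of the lemma one has $T(\epsilon)\le M$ --- see Remark~\ref{rmk:bound} --- so the $X$-index never exceeds $M$ at all; the unbounded index that needs to be controlled is the $Y$-index $j$.) The idea you are missing is a role swap: since $X$ and $Y$ are instances of the same chain and $X_0<Y_0+S_{max}$, the stopping time induces a nearly uniform intersection time $T_Y(\epsilon)\le M$ for the $Y$-walk, hence $\Pr{\exists i:\,X_i=Y_j}\le(1+\epsilon)/\bar{S}$ for every $j>M$. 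The paper then splits over $j$, not $i$: all pairs with $j\le M$ (balanced or not) go into the first term; the range $M<j\le N$ contributes at most $N(1+\epsilon)/\bar{S}$ by the swapped guarantee; and $j>N$ contributes at most $M\,\Pr{Y_N\le X_M}\le M\,e^{-M}$ by Hoeffding once $N=2M(S_{max}/\bar{S})^2$ --- which is where that factor actually comes from.

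The quantitative claims you substitute for this also fail. For the ``unbalanced'' event, $\{\exists j>2i:\,X_i=Y_j\}\subset\{Y_{2i+1}\le X_i\}$, and Hoeffding gives a probability of order $\exp\bigl(-c\,i\,(\bar{S}/S_{max})^2\bigr)$, not $e^{-M}$; in the kangaroo application $S_{max}/\bar{S}\approx(d+1)/2$, so this bound is vacuous for all $i$ up to about $(S_{max}/\bar{S})^2$, and in particular the early unbalanced pairs ($i\le M$, $j>2i$) are not rare events and are left uncovered by your decomposition. Likewise, no pointwise renewal-density bound of the form $\Pr{v\in\{Y_j\}_j}\le\frac{2(S_{max}/\bar{S})^2}{\bar{S}}(1+\epsilon)$ follows from Definition~\ref{def:uniform-time}: near $Y_0$ this density can be as large as a single-step probability (for the kangaroo walk, $p(1)\approx\frac{1}{d+1}\gg\frac{1}{\bar{S}}$), and points close to $Y_0$ remain in the support of $\P^i(X_0,\cdot)$ for every $i$, so the conversion you flag as ``the delicate part'' is not merely delicate --- it is false as a pointwise statement, and the paper's proof never needs it.
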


\begin{remark} \label{rmk:bound}
To apply the lemma in the unbounded case observe that if $M$ is a constant then $T'(\epsilon')=\min\{T(\epsilon),M\}$ is a bounded nearly uniform intersection time with $\epsilon'=\epsilon+\frac{\Pr{T(\epsilon)>M}}{1/\bar{S}}$.
\end{remark}

\begin{proof}
If $Y_0<X_0$ then no intersections can occur until the first time $Y_j\geq X_0$ so the maximum in the definition of $B_\epsilon$ is achieved by some $Y_0\geq X_0$, i.e.
$$
B_{\epsilon} = \max_{X_0\leq Y_0<X_0+S_{max}} E\sum_{i=1}^{T(\epsilon)} \bone_{\{\exists j:\,X_i=Y_j\}}\,.
$$

The $\{Y_j\}$ walk will be examined in three pieces: a burn-in, a mid-range, and an asymptotic portion.
In particular, since $T(\epsilon)\leq M$ then for any constant $N\geq M$
$$
B_\epsilon
 \leq  \max_{X_0\leq Y_0 < X_0+S_{max}}
			E\sum_{i=1}^M \left(\sum_{j=0}^M \bone_{\{X_i=Y_j\}} + \sum_{j=M+1}^N \bone_{\{X_i=Y_j\}} + \bone_{\{\exists j>N:\,X_i=Y_j\}}\right)
$$

Consider the first summation.
\begin{eqnarray*}
E \sum_{i=1}^M \sum_{j=0}^M \bone_{\{X_i=Y_j\}}
  &=& E \sum_{i=1}^M \sum_{j=0}^M \sum_w \bone_{\{X_i=Y_j=w\}} \\
  &=& \sum_{i=1}^M \sum_{j=0}^M \sum_w \P^i(X_0,w)\P^j(Y_0,w) \\
  &\leq& \sum_{i=1}^M \max_{u,v} \P^i(u,v)\,\sum_{j=0}^i (1+\bone_{\{j<i\}}) \max_z \sum_w \P^j(z,w) \\
  &=&    \sum_{i=1}^M (1+2i)\max_{u,v} \P^i(u,v)
\end{eqnarray*}
The inequality follows by letting $i$ denote the larger of the two indices and $j$ the smaller,
while the final equality is because $\sum_w\P^j(z,w)=1$.

Next, the case when $M<j\leq N$.
\begin{eqnarray*}
E \sum_{i=1}^M \sum_{j=M+1}^N \bone_{\{X_i=Y_j\}}
  &=& E \sum_{j=M+1}^N \sum_{i=1}^M \bone_{\{X_i=Y_j\}} \\
  &=& \sum_{j=M+1}^N \Pr{\exists i\in[1\ldots M]:\,X_i=Y_j} \\
  &\leq& E \sum_{j=T_Y(\epsilon)+1}^{T_Y(\epsilon)+N} \Pr{\exists i:\,X_i=Y_j} \\
  &\leq& N\frac{1+\epsilon}{\bar{S}}
\end{eqnarray*}
The second equality is because the $X_i$ walk is increasing, so for fixed $j$ there can be at most one $i$ with $X_i=Y_j$. The first inequality is because $X_i$ and $Y_j$ are instances of the same Markov Chain, and so the stopping time $T(\epsilon)$ induces a nearly uniform intersection time $T_Y(\epsilon)\leq M$ for the $Y_j$ walk as well.
This applies as long as $X_0<Y_0+S_{max}$ as is the case here.

Finally the case that $j>N$. By Hoeffding's Inequality
$$
\Pr{Y_N-Y_0\leq \frac 12\,N\bar{S}}
  \leq \exp\left(\frac{-N^2\bar{S}^2}{2N\,S_{max}^2}\right)
    =  \exp\left(-\frac 12\,N\left(\bar{S}/S_{max}\right)^2\right)\,.
$$
Set $N=2M(S_{max}/\bar{S})^2$. Then with probability $1-e^{-M}$
$$
Y_N > Y_0 + \frac 12\,N\bar{S}
   \geq Y_0+M\,S_{max}
   \geq X_0+M\,S_{max}
   \geq X_M\,.
$$
In particular, $\Pr{Y_N\leq X_M} \leq e^{-M}$ and so
$$
E \sum_{i=1}^M \bone_{\{\exists j>N:\,X_i=Y_j\}}
  \leq M\,\Pr{Y_N\leq X_M} \leq M\,e^{-M}\,.
$$
\end{proof}


\section{Catching Kangaroos}

The tools developed in the previous section will now be applied to a concrete problem, Pollard's Kangaroo Method for discrete logarithm.


\subsection{Pollard's Kangaroo Method} \label{sec:prelim}

We describe here the Kangaroo method, originally known as the Lambda method for catching Kangaroos.
The Distinguished Points implementation of \cite{VW99.1} is given because it is more efficient than the original implementation of \cite{Pol78.1}.

\vspace{2ex}\noindent{\bf Problem:} {\em Given $g,h\in G$, solve for $x\in[a,b]$ with $h=g^x$.}

\vspace{1ex}\noindent{\bf Method:} Pollard's Kangaroo method (distinguished points version).

\vspace{1ex}\noindent{\bf Preliminary Steps:}
\begin{itemize}
\item Define a set $D\subset G$ of ``distinguished points'', with $\frac{|D|}{|G|}=\frac{c}{\sqrt{b-a}}$ for some constant $c$.
\item Define a set of jump sizes $S=\{s_0,s_1,\ldots,s_d\}$.
We consider powers of two, $S=\{2^k\}_{k=0}^d$, with
$d\approx\log_2 \sqrt{b-a} + \log_2\log_2 \sqrt{b-a}-2$, chosen so that elements of $S$ average to a jump size of $\bar{S}\approx\frac{\sqrt{b-a}}{2}$.
This can be made an equality by taking $p:\,S\to[0,1]$ to be a probability distribution such that $\bar{S}=\sum_{s\in S} s\,p(s)=\frac{\sqrt{b-a}}{2}$.
\item Finally, a hash function $F:G\to S$
which ``randomly'' assigns jump sizes such that $\Pr{F(\g)=s}\approx p(s)$ for every $\g\in G$.
\end{itemize}

\vspace{1ex}\noindent{\bf The Algorithm:}
\begin{itemize}
\item Let $Y_0=\frac{a+b}{2}$, $X_0=x$, and $d_0=0$. Observe that $g^{X_0}=hg^{d_0}$.
\item Recursively define $Y_{j+1}=Y_j+F(g^{Y_j})$ and likewise $d_{i+1}=d_i+F(hg^{d_i})$. This implicitly defines $X_{i+1}=X_i+F(g^{X_i})=x+d_{i+1}$.
\item If $g^{Y_j}\in D$ then store the pair $(g^{Y_j},Y_j-Y_0)$ with an identifier $T$ (for tame). Likewise if $g^{X_i}=hg^{d_i}\in D$ then store $(g^{X_i},d_i)$ with an identifier $W$ (for wild).
\item Once some distinguished point has been stored with both identifiers $T$ and $W$, say $g^{X_i}=g^{Y_j}$ where $(g^{X_i},d_j)$ and $(g^{Y_j},Y_j-Y_0)$ were stored, then
\begin{eqnarray*}
Y_j\equiv X_i\equiv x+d_i \mod |G| && \\
\Longrightarrow\ x\equiv Y_j-d_i \mod|G| &&
\end{eqnarray*}
\end{itemize}

The $Y_j$ walk is called the ``tame kangaroo'' because its position is known, whereas the position $X_i$ of the ``wild kangaroo'' is to be determined by the algorithm.
This was originally known as the Lambda method because the two walks are initially different, but once $g^{Y_j}=g^{X_i}$ then they proceed along the same route, forming a $\lambda$ shape.

Theorem~\ref{thm:main} makes rigorous the following commonly used heuristic:
Suppose $X_0\in[a,b]$ is a uniform random value and $Y_0\geq X_0$.
Run the tame kangaroo infinitely far.
The wild kangaroo requires $\EE(Y_0-X_0)/\bar{S}=(b-a)/(4\bar{S})$ steps to reach $Y_0$.
Subsequently, at each step the probability that the wild kangaroo lands on a spot visited by the tame kangaroo is roughly $\wp=\bar{S}^{-1}$,
so the expected number of additional steps by the wild kangaroo until a collision is then around $\wp^{-1}=\bar{S}$.
By symmetry the tame kangaroo also averaged $\wp^{-1}$ steps until a collision.
About $\frac{\sqrt{b-a}}{c}$ additional steps are required until a distinguished point is reached.
Since $X_i$ and $Y_j$ are incremented simultaneously the total number of steps taken is then
$$
2\left(\frac{b-a}{4\bar{S}} + \bar{S} + \frac{\sqrt{b-a}}{c}\right)\,.
$$
This is minimized when $\bar{S}=\frac{\sqrt{b-a}}{2}$, with $(2+2c^{-1})\sqrt{b-a}$ steps sufficing.

If, instead, the distribution of $X_0$ is unknown then in the worst case $|Y_0-X_0|/\bar{S} = (b-a)/(2\bar{S})$ and the bound is $(3+2c^{-1})\sqrt{b-a}$ when $\bar{S}=\frac{\sqrt{b-a}}{2}$.

Our analysis assumes that the Kangaroo method involves a truly random hash function:
if $\g\in G$ then $F(\g)$ is equally likely to be any of the jump sizes, independent of all other $F(\g')$.
In practice different hash functions will be used on different groups -- whether over a
subgroup of integers mod p, elliptic curve groups, etc -- but in general the hash is chosen to ``look random.''
Since the Kangaroo method applies on all cyclic groups then a constructive proof would involve the impossible task of explicitly constructing a hash
on every cyclic group, and so the assumption of a truly random hash is made in all attempts at analyzing it of which we are aware \cite{Tes01.1,VW99.1,Pol00.1}.
A second assumption is that the distinguished points are well distributed with $c\xrightarrow{(b-a)\to\infty}\infty$; either they are chosen uniformly at random, or if $c=\Omega(d^2\log d)$ then roughly constant spacing between points will suffice.
The assumption on distinguished points can be dropped if one instead analyzes Pollard's (slower) original algorithm, to which our methods also apply.


\subsection{Analysis of the Kangaroo Method}  \label{sec:kangaroo}

In order to understand our approach to bounding time until the kangaroos have visited a common location, which we call a {\em collision}, it will be helpful to consider a simplified version of the Kangaroo method.
First, observe that because hash values $F(\g)$ are independent then $X_i$ and $Y_j$ are independent random walks at least until they intersect, and so to bound time until this occurs it suffices to assume they are independent random walks even after they have collided.
Second, these are random walks on $\ZZ/|G|\ZZ$, so if we drop the modular arithmetic and work on $\ZZ$ then the time until a collision can only be made worse.
Third, since the walks proceed strictly in the positive direction on $\ZZ$ then in order to determine the number of hops the ``wild kangaroo'' (described by $X_i$) takes until it is caught by the ``tame kangaroo'' (i.e. $X_i=Y_j$ on $\ZZ$),
it suffices to run the tame kangaroo infinitely long and only after this have the wild kangaroo start hopping.

The intersection results of the previous section will now be applied to the Kangaroo method.
Recall that $d$ is chosen so that the average step size in $S=\{2^k\}_{k=0}^d$ is roughly $\bar{S}\approx\frac{\sqrt{b-a}}{2}$,
and that this can be made an equality by choosing step sizes from a probability distribution $p$ on $S$.
In this section we analyze the natural setting where $\frac{\gamma}{d+1}\geq p(s)\geq\frac{\gamma^{-1}}{d+1}$ for some constant $\gamma\geq 1$; indeed $\gamma=2$ is sufficient for some $p,\,d$ to exist with $\bar{S}=\frac{\sqrt{b-a}}{2}$ exactly.

The first step in bounding collision time will be to construct a nearly uniform intersection time.
Our approach involves constructing a tentative stopping time $\T_{tent}$ where $Y_{\T_{tent}}$ is uniformly distributed over some interval of length $L$,
and then accepting or rejecting this in such a way that $Y_j$ will be equally likely to visit any state beyond the left endpoint of the interval in which it is first accepted.
It follows that once $X_i\geq Y_\T$ then the probability that $X_i=Y_j$ for some $j$ will be a constant.

\begin{lemma} \label{lem:intersection_time}
Consider a Kangaroo walk with step sizes $S=\{2^k\}_{k=0}^d$ and transition probabilities $\frac{\gamma}{d+1}\geq p(s)\geq\frac{\gamma^{-1}}{d+1}$ for some constant $\gamma\geq 1$.
Then there is a bounded nearly uniform intersection time with $\Omega=\{(X_0,Y_0):\,|X_0-Y_0|<S_{max}=2^d\}$ and
$$
T\left(\frac{2}{d+1}\right)\leq 64\gamma^5(d+1)^5\,.
$$
\end{lemma}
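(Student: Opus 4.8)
The plan is to turn the tame walk $\{Y_j\}$, after a suitable stopping time, into a \emph{stationary} (equilibrium) renewal process, for which every integer beyond the start is hit with probability exactly $1/\bar{S}$. Recall the renewal fact that if a process with gap law $p$ is begun so that its first gap carries the equilibrium distribution $\Pr{S>r}/\bar{S}$, then $\Pr{n\in\{Y_j\}}=1/\bar{S}$ for all $n\ge 0$; this is exactly Definition~\ref{def:uniform-time} with $U=1/\bar{S}$ and $\epsilon=0$. Thus the entire task is to realize such an equilibrium start as a genuine, and ultimately bounded, stopping time — once that is done, the stated $\epsilon=2/(d+1)$ will arise solely from truncating the construction to make it bounded, via Remark~\ref{rmk:bound}.

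To realize the equilibrium start I would use accept/reject. At each step $Y_j\to Y_{j+1}=Y_j+s$ with $s=F(g^{Y_j})$, draw an independent offset $o$ uniform on $\{0,1,\dots,2^d-1\}$; this is the tentative time $\T_{tent}$, at which the reference point $Y_j+o$ is uniform over a block of length $L=S_{max}=2^d$. Accept iff $o<s$. A step of size $s$ is then accepted with probability $s/2^d$, so the accepted step is size-biased with law $s\,p(s)/\bar{S}$, and conditioned on acceptance the offset is uniform on $\{0,\dots,s-1\}$; hence the forward distance from the reference point to the next visited state $Y_{j+1}$ has law $\Pr{S\ge r}/\bar{S}$, which is precisely the equilibrium residual. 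Since the gaps after $Y_{j+1}$ are fresh i.i.d. draws, the post-acceptance law of $\{Y_j\}$ is the equilibrium renewal measure, and every state beyond the reference point of the first accepting block is hit with probability $1/\bar{S}$. The powers-of-two structure is what makes the block/offset mechanism match the admissible step sizes (equivalently, in digit form, it lets one randomize the low-order bits of the visited state directly), which is also what will permit the later generalization to powers of $n$. The construction is carried out on the tame walk $Y$ (run to infinity, as set up in Section~\ref{sec:collision}); since $|X_0-Y_0|<S_{max}$, the wild walk needs only a bounded number of steps to pass the resulting accepting level, after which $\Pr{\exists j:\,X_i=Y_j}=1/\bar{S}$, giving the intersection time as in the symmetry argument of Lemma~\ref{lem:B_epsilon}.

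For boundedness and the value of $\epsilon$, note that the per-step acceptance probability is $\sum_s p(s)\,s/2^d=\bar{S}/2^d$. Under the hypothesis $\gamma^{-1}/(d+1)\le p(s)\le \gamma/(d+1)$ one has $\bar{S}=\sum_{k=0}^d 2^k p(2^k)\asymp 2^{d+1}/(d+1)$, so this probability is at least about $\tfrac{2}{\gamma(d+1)}$ and the number of steps to the first acceptance is geometric with that rate. I would then truncate the stopping time at $M=64\gamma^5(d+1)^5$ and apply Remark~\ref{rmk:bound}: the penalty added to $\epsilon$ is $\bar{S}\,\Pr{\T>M}\le \bar{S}\,(1-\tfrac{2}{\gamma(d+1)})^M$. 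Because $\ln\bar{S}\asymp(d+1)\ln 2$, forcing this penalty below $2/(d+1)$ needs only $M\gtrsim \gamma(d+1)\ln(\bar{S}(d+1))\asymp\gamma(d+1)^2$; the stated $M=64\gamma^5(d+1)^5$ is a comfortable over-estimate, so the inequality holds with room to spare and we obtain a bounded nearly uniform intersection time with $\epsilon=2/(d+1)$.

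The main obstacle is the exactness claim in the middle step: verifying that the accept/reject rule reproduces \emph{exactly} the equilibrium residual, and hence the per-state hitting probability $1/\bar{S}$ for every state beyond the accepting block, while correctly accounting for the states that are jumped over and for the conditioning on the accepting configuration — this is a discrete inspection-paradox identity and is where the construction must be designed with care. A secondary subtlety is that the bound must hold uniformly over all initial pairs with $|X_0-Y_0|<S_{max}$, which is why the guarantee is phrased relative to the first accepting block (within $O(S_{max})$ of the common start) rather than an absolute level; once the identity and this uniformity are secured, the geometric tail estimate, the $\gamma$-dependent lower bound on the acceptance probability, and the truncation are all routine.
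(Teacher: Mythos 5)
Your high-level plan is the same as the paper's: create a stopping time on the tame walk after which the visited states form an equilibrium (stationary) renewal process with per-state hitting probability $1/\bar{S}$, realize the equilibrium residual by accept/reject, then get boundedness from a geometric acceptance rate of order $1/\gamma(d+1)$, a Hoeffding estimate, and truncation via Remark~\ref{rmk:bound}. But there is a genuine gap at the central exactness step --- the very point you flag as ``the main obstacle'' --- and for your specific construction the needed statement is false. The inspection-paradox identity you prove (accepted step size-biased, offset uniform within it, hence the gap straddling the reference point $R=Y_J+o$ has the residual law) is a statement about displacements from the \emph{random} point $R$, i.e.\ averaged over $R$. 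What the intersection-time argument requires is the conditional statement: for every fixed $\rho$ and every $y>\rho$, $\Pr{\exists j:\,Y_j=y\mid R=\rho}=(1\pm\epsilon)/\bar{S}$. This is because $R$ is exactly the data the wild walk's stopping rule uses: given $R=\rho$, the landing site $X_{T+\Delta}$ is a function of the independent $X$ walk alone, and for small $\Delta$ it concentrates just past $\rho$ --- precisely where the conditional law is distorted. Conditioning on $R=\rho$ biases the accepting pair $(o,s_J)$: on the event that acceptance occurs at the first step with $o=0$ (so $R=Y_0$), \emph{every} step size is accepted, the straddling gap has law $p$ rather than the residual law, and $\Pr{\exists j:\,Y_j=Y_0+1\mid R=Y_0}=p(1)\approx\frac{1}{d+1}\gg\frac{1}{\bar{S}}$. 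A tiny example makes this concrete: $d=1$, $p(1)=p(2)=\tfrac12$, $\bar{S}=\tfrac32$. Given $R=\rho\geq 1$, the accepting configuration is $(Y_J,o)=(\rho,0)$ with weight $\tfrac13$ or $(\rho-1,1)$ with weight $\tfrac23$, so $\Pr{\exists j:\,Y_j=\rho+1\mid R=\rho}=\tfrac13\cdot\tfrac12+\tfrac23\cdot 1=\tfrac56\neq\tfrac23=1/\bar{S}$, while $\rho+2$ is under-hit at $\tfrac{7}{12}$. These oscillations are invisible to the marginal identity, and they cannot be dismissed by averaging, since the wild kangaroo's landing distribution is correlated with $R$; repairing them by arguing that the pre-acceptance occupation density of the tame walk is flat would be circular, as that is the same equidistribution statement the lemma is supposed to deliver.

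The paper's construction is engineered to remove this correlation rather than average over it. It passes to a lazy copy $\tilde{Y}_t$ of the tame walk and sets $\delta_s$ equal to the displacement (either $0$ or $s$, by laziness) of the first step at which generator $s$ is chosen; then $\delta=\sum_{s\neq 2^d}\delta_s$ is uniform on $\{0,\dots,2^d-1\}$ and --- this is the crucial structural point --- \emph{independent} of the reference point $\I_\T:=\tilde{Y}_\T-\delta$, because the laziness coins at first occurrences are independent of everything else in the trajectory. The acceptance test (accept with probability $\sum_{s>\delta}p(s)$) depends on $\delta$ alone, so it tilts $\delta$ to the residual law while preserving that independence, and an induction on $y\geq\I_\T$ then yields exactly the conditional uniformity $\Pr{\exists t:\,\tilde{Y}_t=y\mid \I_\T}=1/\bar{S}$ that your argument lacks. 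In your scheme the offset is tested against the current step, which ties $R$ to the straddling gap; so it is the construction itself, not merely its verification, that has to change. Your surrounding quantitative work (acceptance rate $\bar{S}/2^d\geq 1/\gamma(d+1)$, Hoeffding so the wild walk passes the reference level, truncation to get $\epsilon=2/(d+1)$) parallels the paper's and would carry over once the reference point is made independent of the local gap structure.
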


\begin{proof}
Consider a lazy walk $\tilde{Y}_t$ with $\tilde{Y}_0=Y_0$ in which a step consists of choosing an item $s\in S$ according to $p$, and then half the time make the transition $u\to u+s$ and half the time do nothing.
The probability that this walk eventually visits a given state $y\in\ZZ$ is exactly the same as for the $Y_j$ walk, so it suffices to replace $Y_j$ by $\tilde{Y}_t$ when showing a nearly uniform intersection time.

For each $s\in S$ let $\delta_s$ denote the step size taken the first time $s$ is chosen, so that $\Pr{\delta_s=0}=\Pr{\delta_s=s}=1/2$.
Define a tentative stopping time $\T_{tent}$ by stopping the first time every $s\in S-\{2^d\}=\{2^k\}_{k=0}^{d-1}$ has been chosen at least once.
Observe that $\delta:=\sum_{s\in S-\{2^d\}} \delta_s\in\{0,1,\ldots, 2^d-1\}$ uniformly at random.
Accept the stopping time with probability $\sum_{s\in S:\,s>\delta} p(s)$ and set $\T=\T_{tent}$.
If it is rejected then re-initialize all $\delta_s$ values (and $\delta$) and continue the $\tilde{Y}_t$ walk until a new stopping time is determined, which can again be either accepted or rejected.

Observe that $\delta\in\{0,1,\ldots,2^d-1\}$ has distribution $\Pr{\delta=\ell}\propto \sum_{s>\ell} p(s)$.
The normalization factor is $\sum_{\ell=0}^{2^d-1}\sum_{s>\ell}p(s) = \sum_{s\in S} p(s)s = \bar{S}$
and so the distribution is
$$
\Pr{\delta=\ell} = \frac{\sum_{s>\ell}p(s)}{\bar{S}}
$$

This stopping rule was constructed so that if $y\geq\tilde{Y}_\T-\delta$ then, as will now be shown,
$\Pr{\exists t:\,\tilde{Y}_t=y}=\bar{S}^{-1}$,
making $T=\min\{i:\,X_i\geq \tilde{Y}_\T-\delta\}$ a uniform intersection time for $X_i$.

Suppose $y=\I_\T$.
The quantity $\I_\T:=\tilde{Y}_\T-\delta$ is independent of $\delta$ because it depends only on those steps not included in a $\delta_s$.
It follows that $\Pr{\exists t:\,\tilde{Y}_t=y\,\mid\,\I_\T}=\Pr{\delta=0}=\bar{S}^{-1}$.

If $y>\I_\T$ then inductively assume that $\Pr{\exists t:\,\tilde{Y}_t=v}=\bar{S}^{-1}$ for all $v\in[\I_\T,y)$.
Then
\begin{eqnarray*}
\lefteqn{\Pr{\exists t:\,\tilde{Y}_t=y\,\mid\,\I_\T} } \\
  &=& \Pr{\delta=y-\I_\T\,\mid\,\I_\T} + \sum_{\I_\T\leq v<y} \Pr{\exists t:\,\tilde{Y}_t=v\,\mid\,\I_\T}\,p(y-v) \\
  &=& \frac{\sum_{v<\I_\T} p(y-v)}{\bar{S}} + \frac{\sum_{\I_\T\leq v<y} p(y-v)}{\bar{S}} = \frac{1}{\bar{S}}
\end{eqnarray*}

It remains only to compute $T=\min\{i:\,X_i\geq \tilde{Y}_\T-\delta\}$, which in turn requires a value for $\T$.

To determine time until a tentative stopping time $\T_{tent}$ it suffices to find the probability that in $j$ steps some generator in $\{2^k\}_{k=0}^d$ has not been chosen.
The probability a specified generator is chosen in step $j$ is at least $\frac{\gamma^{-1}}{d+1}$, and so
\begin{eqnarray*}
\Pr{\T_{tent}>j}
  &\leq& \sum_{k=0}^d \Pr{s=2^k \textrm{ has not been chosen in $j$ steps}} \\
  &\leq& (d+1)\left(1-\frac{\gamma^{-1}}{d+1}\right)^j \\
  &\leq& (d+1)e^{-j/\gamma(d+1)}
\end{eqnarray*}
As a result
$$
\Pr{\T_{tent} \geq \gamma(d+1)\ln(2\gamma(d+1)^2)} \leq \frac{1}{2\gamma(d+1)}
$$
Each tentative stopping time is accepted with probability
$$
\sum_{\delta=0}^{2^d-1} \frac{1}{2^d}\times\sum_{s>\delta} p(s)
   = \frac{\sum_s p(s)s}{2^d}
   = \frac{\bar{S}}{2^d}
   \geq \frac{1}{2^d}\,\frac{\gamma^{-1}S_{max}}{d+1} = \frac{1}{\gamma(d+1)}
$$
It follows that in $2\gamma(d+1)\ln(1/\epsilon)$ rounds of $\gamma(d+1)\ln(2\gamma(d+1)^2)$ steps each the probability that a stopping time has not yet been determined is then at most
$$
\left(\frac{1}{2\gamma(d+1)} + 1- \frac{1}{\gamma(d+1)}\right)^{2\gamma(d+1)\ln(1/\epsilon)}
 \leq \epsilon
$$
and so if $\T(\epsilon)=2\gamma^2(d+1)^2\ln(2\gamma(d+1)^2)\ln(1/\epsilon)$ then
$\Pr{\T > \T(\epsilon)} \leq \epsilon$.

Finally, it remains to determine $T(\epsilon)$.
If $X_i\geq \tilde{Y}_{\T(1/(d+1)\bar{S})}$ then
$$
\big| \Pr{\exists t:\,X_i=\tilde{Y}_t}-\bar{S}^{-1} \big| \leq \frac{1}{(d+1)\bar{S}}
$$
By Lemma \ref{lem:Hoeffding} below, if
$M = 4\gamma^3(d+1)^3\ln(2\gamma(d+1)^2)\ln((d+1)\bar{S})$
then
$$
\Pr{X_M < \tilde{Y}_{\T(1/(d+1)\bar{S})}} \leq 1/[(d+1)\bar{S}]
$$
and so overall $\big| \Pr{\exists t:\,X_i=\tilde{Y}_\T}-\bar{S}^{-1} \big| \leq 2/[(d+1)\bar{S}]$.
It follows that
$$
T\left(\frac{2}{d+1}\right) \leq M = 4\gamma^3(d+1)^3\ln(2\gamma(d+1)^2)\ln((d+1)\bar{S})
$$
This simplifies via the relations $\ln(x)\leq x$ and
$$
\bar{S} = \sum_{k=0}^d 2^k\,p(2^k) \leq \frac{\gamma}{d+1}\,\sum_{k=0}^d 2^k<\frac{2\gamma}{d+1}\,2^d
$$
\end{proof}

The following simple application of Hoeffding's Inequality was used above.

\begin{lemma} \label{lem:Hoeffding}
Suppose a non-negative random variable has average $\bar{S}$ and maximum $S_{max}$.
If $N$ is a constant and $\delta_1,\,\delta_2,\,\ldots,\,\delta_M$ are some $M$ independent samples then the sum $X=\sum_{i=1}^M \delta_i$ satisfies
$$
\Pr{X < (1+N)S_{max}} \leq \epsilon
$$
when
$$
M = 2\frac{S_{max}}{\bar{S}}\,
\max\left\{ \frac{S_{max}}{\bar{S}}\ln(1/\epsilon),\,1+N\right\}
$$
\end{lemma}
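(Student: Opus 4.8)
The plan is to recognize $X=\sum_{i=1}^M \delta_i$ as a sum of independent random variables each supported on $[0,S_{max}]$ with common mean $\bar{S}$, so that $\EE X = M\bar{S}$, and to control its lower tail with the one-sided Hoeffding bound
$$
\Pr{X \leq \EE X - t} \leq \exp\left(-\frac{2t^2}{M S_{max}^2}\right),
$$
valid for every $t\geq 0$ since each summand has range $S_{max}$. The event $\{X<(1+N)S_{max}\}$ is contained in $\{X\leq \EE X - t\}$ once I set $t=M\bar{S}-(1+N)S_{max}$, so the whole argument reduces to checking that this $t$ is nonnegative and that the resulting exponent is at least $\ln(1/\epsilon)$.

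The role of the two branches of the maximum defining $M$ is exactly to supply these two facts. First I would use the branch $M\geq 2\frac{S_{max}}{\bar{S}}(1+N)$, i.e. $M\bar{S}\geq 2(1+N)S_{max}$, which gives not merely $t\geq 0$ but the stronger estimate $t=M\bar{S}-(1+N)S_{max}\geq \tfrac12 M\bar{S}$; in words, the target value $(1+N)S_{max}$ lies below half the mean, so there is a genuine deviation to exploit. Substituting $t\geq\tfrac12 M\bar{S}$ into the Hoeffding exponent yields
$$
\frac{2t^2}{M S_{max}^2} \geq \frac{M\bar{S}^2}{2 S_{max}^2}.
$$
Next I would invoke the second branch $M\geq 2\left(\frac{S_{max}}{\bar{S}}\right)^2\ln(1/\epsilon)$, which on substitution into the right-hand side above produces precisely $\frac{M\bar{S}^2}{2 S_{max}^2}\geq \ln(1/\epsilon)$. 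Hence $\exp(-2t^2/M S_{max}^2)\leq \epsilon$, which is the claimed bound.

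Since both inequalities $t\geq\tfrac12 M\bar{S}$ and $\frac{M\bar{S}^2}{2S_{max}^2}\geq\ln(1/\epsilon)$ fall directly out of the two branches, there is no serious analytic obstacle; the step to be careful about will be justifying that the first branch is what licenses replacing the target $(1+N)S_{max}$ by a deviation of order $M\bar{S}$ from the mean. Without it the lower-tail bound could degrade or even become vacuous (for instance if $N$ were large enough that $(1+N)S_{max}$ exceeded $\EE X$, making $t<0$). Writing $M=2\frac{S_{max}}{\bar{S}}\max\{\cdot,\cdot\}$ packages exactly these two requirements into a single expression, and so the lemma follows with no case analysis beyond selecting the appropriate branch in each of the two estimates.
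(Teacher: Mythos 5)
Your proposal is correct and follows essentially the same route as the paper: both apply the one-sided Hoeffding bound to the lower tail of $X$ with deviation $\tfrac{M}{2}\bar{S}$ from the mean $\EE X = M\bar{S}$, obtaining the exponent $M\bar{S}^2/(2S_{max}^2)$, with the two branches of the maximum in $M$ guaranteeing respectively that $(1+N)S_{max}\leq \tfrac{M}{2}\bar{S}$ and that the exponent is at least $\ln(1/\epsilon)$. The only difference is presentational: the paper applies Hoeffding to $-X$ and leaves the role of the two branches implicit, whereas you spell it out.
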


\begin{proof}
Recall Hoeffding's Inequality, that if $Y$ is the sum of $n$ independent random variables with values in $[a,b]$ then for any $t\geq 0$
$$
\Pr{Y -\EE Y \geq t} \leq \exp\left(\frac{-2t^2}{n(b-a)^2}\right)\,.
$$
Taking $Y=-X$ as the sum of $-\delta_i\in[-S_{max},0]$ it follows that
$$
\Pr{X-\EE X \leq -\frac{M}{2}\,\bar{S}} \leq \exp\left(\frac{-M^2\bar{S}^2}{2M\,S_{max}^2}\right)
$$
Plugging in $\EE X = M\bar{S}$ with $M$ from the Lemma finishes the proof.
\end{proof}

It remains only to upper bound $B_{\epsilon}$.

\begin{lemma} \label{lem:B_epsilon-kangaroo}
The nearly uniform intersection time of Lemma \ref{lem:intersection_time}
has
$$
B_{\epsilon}=\Theta\left(\frac{1}{d+1}\right)=o_d(1)
$$
\end{lemma}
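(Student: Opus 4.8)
The plan is to feed the nearly uniform intersection time of Lemma~\ref{lem:intersection_time} into the general estimate of Lemma~\ref{lem:B_epsilon} and to show that the resulting burn-in sum is dominated by its $i=1$ term, which is exactly of order $1/(d+1)$, while every other contribution is lower order. Throughout write $q_i := \max_{u,v}\P^i(u,v)$; by transitivity $\P^i(u,v)=\Pr{\sum_{t=1}^i F_t = v-u}$, so $q_i = \max_m \Pr{\sum_{t=1}^i F_t = m}$ is the largest point probability of a sum of $i$ independent jumps $F_t\in\{2^0,\dots,2^d\}$ distributed according to $p$. From Lemma~\ref{lem:intersection_time} we may take $M = 64\gamma^5(d+1)^5$ as the bound on the (bounded) stopping time, and the relations in its proof give $\bar S = \Theta(2^d/(d+1))$, hence $S_{max}/\bar S = 2^d/\bar S = \Theta(d+1)$.

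The easy part is the second term of Lemma~\ref{lem:B_epsilon}. Since $1/\bar S = O((d+1)/2^d)$, $(S_{max}/\bar S)^2 = O((d+1)^2)$, $1+\epsilon = O(1)$ and $M = O((d+1)^5)$, the quantity $M\big(\tfrac{2(S_{max}/\bar S)^2}{\bar S}(1+\epsilon)+e^{-M}\big)$ is $O((d+1)^8/2^d)$, which is exponentially small in $d$ and hence $o(1/(d+1))$. It remains only to estimate $\sum_{i=1}^M (1+2i)\,q_i$.

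I would control $q_i$ in two ways. First, a monotonicity observation: since $\Pr{\sum_{t=1}^{i+1} F_t = m} = \sum_k p(2^k)\Pr{\sum_{t=1}^{i}F_t = m-2^k}\le q_i\sum_k p(2^k) = q_i$, the sequence $q_i$ is non-increasing, so $q_i \le q_1 = \max_k p(2^k)\le \gamma/(d+1)$ for all $i$. Second, a combinatorial decay bound for small $i$: writing $\Pr{\sum_t F_t = m}$ as a sum over ordered representations $(k_1,\dots,k_i)$ with $\sum_t 2^{k_t}=m$, each summand is at most $(\gamma/(d+1))^i$, so $q_i \le R_i\,(\gamma/(d+1))^i$, where $R_i = \max_m \#\{(k_1,\dots,k_i): \sum_t 2^{k_t}=m\}$ is the maximal number of ordered ways to write an integer as a sum of $i$ powers of two. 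The crucial claim --- and the main obstacle --- is that $R_i$ is bounded by a constant depending only on $i$, not on $d$ or $m$. This is a statement about binary partitions: the number of \emph{unordered} representations of $m$ as a sum of exactly $i$ powers of two is finite uniformly in $m$, which one proves by a parity/descent recursion (if $N_0$ is the number of parts equal to $1$, then $N_0\equiv m \bmod 2$, and removing those parts reduces the problem to a representation of $(m-N_0)/2$ into $i-N_0$ parts, so at each step either the number of parts or the target strictly decreases). Multiplying by the at most $i!$ orderings gives $R_i\le i!\,g(i)$ with $g(i)$ depending only on $i$. I expect establishing this uniform bound to be the only genuinely delicate point.

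Granting the bound on $R_i$, the estimate follows by splitting at the constant $i_0 = 11$; since $M=\Theta((d+1)^5)$, we have $M^2/(d+1)^{i_0}=O(1/(d+1))$. For the head, the combinatorial bound gives $\sum_{i=1}^{i_0}(1+2i)q_i \le \sum_{i=1}^{i_0}(1+2i)R_i(\gamma/(d+1))^i$, whose dominant $i=1$ term is $3\gamma/(d+1)$ and whose remaining terms are $O(1/(d+1)^2)$, so the head is $\Theta(1/(d+1))$. For the tail, monotonicity gives $\sum_{i=i_0+1}^M (1+2i)q_i \le q_{i_0}\sum_{i\le M}(1+2i)\le q_{i_0}\cdot 2M^2 \le R_{i_0}(\gamma/(d+1))^{i_0}\cdot 2M^2 = O(1/(d+1))$ by the choice of $i_0$. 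Adding the two pieces and the negligible second term yields $B_\epsilon = O(1/(d+1))$. For the matching lower bound, taking $X_0=Y_0=0$ in the definition of $B_\epsilon$ and keeping only the first step gives $B_\epsilon \ge \Pr{\exists j:\,X_1=Y_j}\ge \Pr{X_1 = Y_1} = \sum_k p(2^k)^2\ge \gamma^{-2}/(d+1)$, so $B_\epsilon = \Omega(1/(d+1))$. Hence $B_\epsilon = \Theta(1/(d+1)) = o_d(1)$.
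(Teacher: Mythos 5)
Your proposal has the same architecture as the paper's proof: plug the bounded stopping time $M=64\gamma^5(d+1)^5$ of Lemma~\ref{lem:intersection_time} into Lemma~\ref{lem:B_epsilon}; bound $q_i=\max_{u,v}\P^i(u,v)$ by the number of ordered representations of an integer as a sum of $i$ powers of two times $(\gamma/(d+1))^i$; dispose of the tail $i>i_0$ by monotonicity of $q_i$ (the paper does the identical thing by factoring $\P^i=\P^{i-12}\P^{12}$); and get the matching lower bound from $\Pr{X_1=Y_1}=\sum_s p(s)^2$. Your error-term accounting and head/tail split are correct, and the slightly weaker constants you obtain still give $\Theta(1/(d+1))$, which is all the lemma claims.

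The gap is exactly at the point you flagged as delicate: the claim that $R_i$ is bounded by a constant independent of $m$ and $d$ --- equivalently that $g(i)=\sup_{m} U_i(m)$ is finite, where $U_j(m)$ denotes the number of unordered representations of $m$ as a sum of exactly $j$ powers of two. Your parity/descent recursion does not prove this. Finiteness of $U_i(m)$ for each fixed $m$ is trivial (every part is at most $m$), and the descent only gives termination of the recursion; it is uniformity in $m$ that fails. Concretely, the branch $N_0=0$ (all parts even) halves the target while keeping all $i$ parts, so it can be taken $v_2(m)$ times, the $2$-adic valuation of $m$, which is unbounded over $m$; and each such halving spawns up to $\lfloor i/2\rfloor$ side branches $U_{i-N_0}((m'-N_0)/2)$ with $N_0\ge 2$. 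Unfolding naively yields $U_i(m)\le U_i(m_{\mathrm{odd}})+v_2(m)\,\lfloor i/2\rfloor\max_{j\le i-2}g(j)$, a bound growing with $\log m$; equivalently, the induction on $i$ produces only the vacuous inequality $g(i)\le g(i)+(\text{side terms})$. What rescues the argument is that almost all of those side branches vanish: if the current target $m'$ has $r$ trailing zeros with $r$ large relative to $i$, then for $2\le N_0\le i$ borrowing forces $m'-N_0$ to contain a run of at least $r-\lceil\log_2 i\rceil$ one-bits, so the binary weight of $(m'-N_0)/2$ exceeds $i-N_0$ and $U_{i-N_0}((m'-N_0)/2)=0$. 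Hence only the final $i+\lceil\log_2 i\rceil$ or so halvings can contribute, and the recursion then closes with a bound depending on $i$ alone. This binary-weight/borrowing observation is precisely the paper's contraction lemma: a one-bit of $m$ at position $\ell$ can only be assembled from parts with exponents in $(\ell-i,\ell]$, so any run of more than $i-1$ consecutive zeros may be contracted to $i-1$ zeros without changing the count, reducing the supremum over all $m$ to a maximum over bit strings of length at most $i^2$. With that observation added (in either form) your proof is complete; without it, the crucial uniform bound on $R_i$ is asserted rather than proved.
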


\begin{proof}
This will be shown by applying Lemmas \ref{lem:B_epsilon} and \ref{lem:intersection_time}.

First consider the walk $\hat{\P}$ where $\gamma=1$, i.e. step sizes are chosen uniformly at random.
Observe that $\hat{\P}^i(u,v)=\frac{c_i(u,v)}{(d+1)^i}$ where $c_i(u,v)$ is the number of ways to write $v-u$ as the sum of $i$ (non-distinct, ordered) elements of $\{2^k\}_{k=0}^d$.
In the binary expansion of $v-u$ a non-zero bit $2^\ell$ can only arise as the sum of at most $i$ steps chosen from $\{2^k\}_{k=\ell-i+1}^\ell$, and so any string of more than $i-1$ consecutive zeros can be contracted to $i-1$ zeros without effecting the number of ways to write $v-u$.
This shows that $c_i=\max_{u,v} c_i(u,v)$ can be determined by considering only the bit strings $v-u$ of length $i^2$,
and in particular it is upper bounded by a constant independent of $d$, i.e. $\hat{\P}^i(u,v)=O((d+1)^{-i})$.

In the non-uniform case $\P^i(u,v)\leq \gamma^i\,\hat{\P}^i(u,v) \leq \frac{c_i\gamma^i}{(d+1)^i}$.

If $i\geq 12$ then
\begin{eqnarray*}
\max_{u,v}\P^i(u,v)
  &=& \max_{u,v} \sum_w \P^{i-12}(u,w)\P^{12}(w,v) \\
  &\leq& \max_{u,v} \sum_w \P^{i-12}(u,w)\max_{w}\P^{12}(w,v) \\
  &=& \max_{w,v} \P^{12}(w,v) \leq \frac{c_{12}\gamma^{12}}{(d+1)^{12}}\,.
\end{eqnarray*}

Hence, with $M=64\gamma^5(d+1)^5$ then
\begin{eqnarray*}
\lefteqn{ \sum_{i=1}^M (1+2i)\max_{u,v}\P^i(u,v) } \\
 &\leq& \frac{3*\gamma^1}{d+1}
      +\sum_{i=2}^{11} \frac{(1+2i)*c_i\gamma^i}{(d+1)^i}
      +\frac{(1+2M)(M-11)c_{12}\gamma^{12}}{(d+1)^{12}} \\
 &=& \frac{3\gamma+O(1/(d+1))}{d+1}\,.
\end{eqnarray*}

A bound of $B_\epsilon = \frac{3\gamma+o_d(1)}{d+1}$ follows by applying Lemma \ref{lem:B_epsilon} with $S_{max}=2^d$ and
$$
\bar{S}=\sum_{k=0}^d 2^k\,p(2^k) \geq \frac{\gamma^{-1}}{d+1}\,\sum_{k=0}^d 2^k>\frac{\gamma^{-1}}{d+1}\,2^d\,.
$$

For a corresponding lower bound let $X_0=Y_0$ so that $B_\epsilon \geq \Pr{X_1=Y_1}=\sum_{s\in S}p(s)^2$.
By Cauchy-Schwarz
$$
1=\sum_{s\in S} p(s)\times 1 \leq \sqrt{\sum_{s\in S} p(s)^2}\sqrt{\sum_{s\in S} 1^2}
$$
and so $\sum_{s\in S} p(s)^2\geq \frac{1}{|S|}=\frac{1}{d+1}$ and $B_\epsilon \geq \frac{1}{d+1}$.
\end{proof}

All the tools are now in place to prove the main result of the paper.

\begin{proof}[Proof of Theorem \ref{thm:main}]
Note that the group elements $g^{(2^k)}$ can be pre-computed, so that each step of a kangaroo requires only a single group multiplication.

As discussed in the heuristic argument of Section~\ref{sec:prelim}, an average of $\frac{|Y_0-X_0|}{\bar{S}}$ steps are needed to put the smaller of the starting states (e.g. $Y_0<X_0$) within $S_{max}=2^d$ of the one that started ahead.
If the Distinguished Points are uniformly randomly distributed then the heuristic for these points is again correct.
If instead they are roughly constantly spaced and $c=\Omega(d^2\log d)$ then observe that, in the proof of Lemma~\ref{lem:intersection_time} it was established that after some $\T_{tent}$
steps the kangaroos will be uniformly random over some interval of length $2^d\sim \frac 14\sqrt{b-a}\log_2\sqrt{b-a}$.
It is easily seen that $\EE\T_{tent}\leq\gamma(d+1)^2$,
so if the Distinguished Points cover a $\frac{c}{\sqrt{b-a}}$ fraction of vertices then an average of $\frac{\sqrt{b-a}}{c}$ such samples are needed, independent of $\T_{tent}$.
It follows that an average of $\EE(\T_{tent})\frac{\sqrt{b-a}}{c}=o_d(1)*\sqrt{b-a}$ extra steps suffice.

It remains to make rigorous the claim regarding $\wp^{-1}$.
In the remainder we may thus assume that $|Y_0-X_0|< 2^d=S_{max}$.
By Lemma~\ref{lem:intersection_time} a bounded nearly uniform intersection time has
$T\left(\frac{2}{d+1}\right)\leq 64\gamma^5(d+1)^5$,
while Lemma \ref{lem:B_epsilon-kangaroo} shows that $B_{\epsilon}=o_d(1)$.
The upper bound of Theorem \ref{thm:birthday} is then $\left(\frac 12+o_d(1)\right)\sqrt{b-a}$
while the lower bound is $\left(\frac 12-o_d(1)\right)\sqrt{b-a}$.
\end{proof}


\section{Resolution of a Conjecture of Pollard} \label{sec:generalize}

In the previous section the Kangaroo method was analyzed for the most common situation, when the generating set is given by powers of $2$.
Pollard conjectured in \cite{Pol00.1} that the same result holds for powers of any integer $n\geq 2$, again under the assumption that $\bar{S}\approx \frac{\sqrt{b-a}}{2}$.
In this section we show his conjecture to be correct.

\begin{theorem}
When step sizes are chosen from $S=\{n^k\}_{k=0}^d$ with transition probabilities $\frac{\gamma}{d+1}\geq p(s)\geq\frac{\gamma^{-1}}{d+1}$ such that $\bar{S}=\frac{\sqrt{b-a}}{2}$,
then Theorem \ref{thm:main} still holds.
\end{theorem}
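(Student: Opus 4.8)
The plan is to observe that the entire apparatus of Section~\ref{sec:collision} --- Theorem~\ref{thm:birthday} together with Lemmas~\ref{lem:expectations}, \ref{lem:prob}, and \ref{lem:B_epsilon} --- is stated for an arbitrary increasing transitive chain on $\ZZ$, and that the assembly carried out in the proof of Theorem~\ref{thm:main} uses only three facts: that $\bar{S}\approx\frac{\sqrt{b-a}}{2}$, that there is a bounded nearly uniform intersection time with $\overline{T(\epsilon)}=\mathrm{poly}(d)=\mathrm{polylog}(b-a)\ll\bar{S}$, and that $B_{\epsilon}=o_d(1)$. Since $1=n^0\in S$, the base-$n$ walk is increasing, transitive, and aperiodic, so the framework applies verbatim. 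Thus it suffices to re-establish the two base-specific lemmas, Lemma~\ref{lem:intersection_time} and Lemma~\ref{lem:B_epsilon-kangaroo}, for $S=\{n^k\}_{k=0}^d$.

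The bound on $B_\epsilon$ (Lemma~\ref{lem:B_epsilon-kangaroo}) generalizes with only cosmetic changes. Writing $\hat{\P}$ for the $\gamma=1$ walk, one has $\hat{\P}^i(u,v)=c_i(u,v)/(d+1)^i$, where $c_i(u,v)$ counts ordered representations of $v-u$ as a sum of $i$ elements of $\{n^k\}$. A nonzero base-$n$ digit at position $\ell$ can only be produced by the generators $n^{\ell-i+1},\dots,n^\ell$, so any run of more than $i-1$ consecutive zero digits of $v-u$ may be contracted; hence $c_i=\max_{u,v}c_i(u,v)$ is bounded by a constant depending on $n$ and $i$ but not on $d$, giving $\max_{u,v}\P^i(u,v)=O_n((d+1)^{-i})$. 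Feeding this into the telescoping estimate of Lemma~\ref{lem:B_epsilon} with $S_{max}=n^d$ and $\bar{S}\geq\gamma^{-1}n^d/(d+1)$ yields $B_\epsilon=O_n(1/(d+1))$, while the Cauchy--Schwarz lower bound $\sum_s p(s)^2\geq 1/(d+1)$ is unchanged; so $B_\epsilon=\Theta(1/(d+1))=o_d(1)$.

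The construction of a nearly uniform intersection time (Lemma~\ref{lem:intersection_time}) is where base $n$ genuinely differs, and this is the crux. There the delay $\delta$ was the displacement of the first lazy occurrences of $2^0,\dots,2^{d-1}$, and the crucial observation was that independent fair coins make $\delta$ \emph{uniform} on $\{0,\dots,2^d-1\}$ before the acceptance step --- precisely the statement that every integer below $2^d$ has a unique binary expansion with fair independent bits. For base $n$ this fails: a single fair coin at the first occurrence of $n^k$ contributes only $0$ or $n^k$, so the achievable delays form the sparse subset-sum set $\{\sum_{k\in A}n^k:A\subseteq\{0,\dots,d-1\}\}$ of size $2^d$, which for $n\geq 3$ misses almost all of $\{0,\dots,n^d-1\}$; no rejection rule supported on it can produce the equilibrium law $\Pr{\delta=\ell}=\bar{S}^{-1}\sum_{s>\ell}p(s)$, which charges every point. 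What is needed is that each base-$n$ digit be nearly uniform on $\{0,\dots,n-1\}$. I would obtain this by waiting until each $n^k$ ($k<d$) has been chosen $m=\Theta_n(\log((d+1)\bar{S}))$ times and reading a digit $c_k$ from the $m$ independent fair lazy coins --- for instance as the binomial count modulo $n$, which by a character estimate lies within total variation $e^{-\Omega(m/n^2)}$ of uniform --- so that, the digits being independent, $\delta=\sum_k c_k n^k$ is within total variation $(d+1)\,e^{-\Omega(m/n^2)}=o_d(1)$ of uniform on $\{0,\dots,n^d-1\}$. Running this through the acceptance step and the induction of Lemma~\ref{lem:intersection_time} then produces a delay within $o_d(1)$ of equilibrium, hence a nearly uniform intersection time with $\epsilon=O(1/(d+1))+o_d(1)$; waiting for $d$ generators, each chosen with probability $\geq\gamma^{-1}/(d+1)$ per step, to appear $m$ times costs $\overline{T}=O_n(m(d+1)^2)=\mathrm{poly}(d)$, and the acceptance probability stays $\geq\bar{S}/n^d\geq\gamma^{-1}/(d+1)$ as before.

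The delicate point --- which I expect to be the hardest part --- is that the quantity driving the induction of Lemma~\ref{lem:intersection_time} must be a \emph{genuine} sub-displacement of the tame walk (the first renewal of the embedded stationary process sits at $\tilde{Y}_{\T}=\I_{\T}+\delta$, after which the walk continues by fresh steps), whereas the equivalence between the lazy walk and the true walk forces the lazy coins to be independent, under which the realized displacement from the $n^k$-occurrences equals (number of real moves)$\,\times n^k$ --- a binomial, never a uniform, count. Thus the harvested digit $c_k=(\text{count}\bmod n)$ cannot simply be identified with the actual displacement, which carries the full count. The resolution I would pursue is to define the reference $\I_{\T}$ through these reduced residues and to verify the renewal equation of Lemma~\ref{lem:intersection_time} with approximate rather than exact inputs, showing that the $o_d(1)$ discrepancy between the genuine displacement and the reduced delay propagates to only an $o_d(1)$ error in $|\Pr{\exists t:\,X_i=\tilde{Y}_t}-\bar{S}^{-1}|$; equivalently, that the residual chain is $o_d(1)$-close to its equilibrium $\pi(\ell)\propto\sum_{s>\ell}p(s)$ after the burn-in. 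Should a direct argument prove unwieldy, an alternative is to couple two copies of the residual chain and use $\max_{u,v}\P^i(u,v)=O_n((d+1)^{-i})$ to force coupling within $\mathrm{poly}(d)$ steps; a naive renewal-convergence bound, by contrast, is inadequate here because the largest step $n^d\sim\sqrt{b-a}$ pushes the convergence scale of $\Pr{w\in\mathrm{range}}$ up to order $\sqrt{b-a}$, which is not $\ll\bar{S}$.
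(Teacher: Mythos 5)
Your reduction to the two base-specific lemmas is exactly right, and your treatment of $B_\epsilon$ (the base-$n$ digit-contraction bound on $c_i(u,v)$, fed into Lemma~\ref{lem:B_epsilon} with $S_{max}=n^d$ and $\bar{S}\geq\gamma^{-1}n^d/(d+1)$, plus the unchanged Cauchy--Schwarz lower bound) is essentially what the paper does. Your diagnosis of why the first-occurrence construction of Lemma~\ref{lem:intersection_time} cannot survive for $n\geq 3$ --- the achievable delays form a subset-sum set of size $2^d$ inside $\{0,\ldots,n^d-1\}$ --- is also correct.

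The gap is in the replacement you propose, and you flag it yourself without resolving it. Harvesting $c_k=(\text{number of real moves at occurrences of } n^k)\bmod n$ severs the link between $\delta$ and the walk's actual trajectory: the true displacement is $(q_k n+c_k)n^k$, so the reference point $\I_\T:=\tilde{Y}_\T-\sum_k c_k n^k$ absorbs the quotients $q_k$, which are functions of the \emph{same} binomial counts as the $c_k$ and hence correlated with $\delta$. The induction in Lemma~\ref{lem:intersection_time} requires the conditional law of $\delta$ given $\I_\T$ to be (essentially exactly) the rejection-reweighted uniform law, and controlling the quotient--remainder dependence is precisely the hard analytic step; your two suggested fixes (``verify the renewal equation with approximate inputs,'' ``couple two copies'') are left as hopes, so the proof is incomplete at its crux. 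The paper sidesteps the issue with a different harvesting rule that keeps $\delta_s$ a genuine displacement \emph{and} makes it exactly uniform: partition the lazy walk's steps into blocks of $n-1$ consecutive steps; when a block chooses the same generator $s$ at all $n-1$ steps, let $m$ be the number of non-lazy moves in the block, so $\Pr{m=\ell}=\binom{n-1}{\ell}2^{-(n-1)}$, and accept $\delta_s=ms$ with probability $\binom{n-1}{m}^{-1}$. The rejection flattens the binomial exactly, $\Pr{\delta_s=\ell s}=2^{-(n-1)}$ for every $\ell\in\{0,\ldots,n-1\}$, so $\delta=\sum_s\delta_s$ is exactly uniform on $\{0,\ldots,n^d-1\}$ while remaining the true displacement accrued in the harvested blocks, independent of all other steps; the base-2 induction then applies verbatim with no approximation. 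The only cost is in the time bound --- a same-generator block occurs with probability at least $(\gamma^{-1}/(d+1))^{n-1}$ and is accepted with probability $n/2^{n-1}$, giving $T\left(\frac{2}{d+1}\right)=2(2\gamma(d+1))^{n+3}\ln n$ --- which is still polynomial in $d$ since $n$ is a fixed constant. That exact uniformization by within-block rejection is the idea missing from your proposal.
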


\begin{proof}
We detail only the differences from the case when $n=2$.

To construct a nearly uniform intersection time, once again consider the walk $\tilde{Y}_t$ which half the time does nothing.
Partition the steps into blocks of $(n-1)$ consecutive steps each.
If the same generator $s\in\{n^k\}_{k=0}^{d-1}$ is chosen at every step in a block then let $m$ be the number of times a step of size $s$ was taken (recall it's lazy), so $\Pr{m=\ell}=\frac{\binom{n-1}{\ell}}{2^{n-1}}$, and with probability $\binom{n-1}{m}^{-1}$ set $\delta_s=m\,s$ if $\delta_s$ is undefined.
In all other cases do not change any $\delta_s$ after the $(n-1)$ steps have been made.
Stop when every $\delta_s$ has been defined.

Observe that $\delta_s$ is uniformly chosen from the possible values $\{m\,s\}_{m=0}^{n-1}$, so the sum $\delta=\sum_s \delta_s$ is a uniformly random $d$ digit number in base $n$.
Once again, accept this candidate stopping time with probability $\sum_{s\in S:\,s>\delta}p(s)$,
and otherwise reset the $\delta_s$ values and find another candidate stopping time.
The same proof as before verifies that if $\I_t:=\tilde{Y}_\T-\delta$ then
$\Pr{\exists t:\,\tilde{Y}_t=y\,\mid\,y\geq\I_\T}=1/\bar{S}$.

Next, determine the number of steps required for the $\tilde{Y}_t$ walk to reach this stopping time.
First consider the time required until a tentative stopping time $\T_{tent}$.
For a specified block and generator $s=n^k$, the probability $s$ was chosen at every step in the block is at least $\left(\frac{\gamma^{-1}}{d+1}\right)^{n-1}$, and when this happens the probability the resulting value is accepted is
$$
\sum_{m=0}^{n-1} \frac{\binom{n-1}{m}}{2^{n-1}}\times \frac{1}{\binom{n-1}{m}} = \frac{n}{2^{n-1}}\,.
$$
Combining these quantities, if $\delta_s$ was previously undefined then the probability it is assigned a value in this block is
$$
\wp\geq\left(\frac{\gamma^{-1}}{d+1}\right)^{n-1}\frac{n}{2^{n-1}}\,.
$$

The probability of not stopping within $N(n-1)$ steps is then
\begin{eqnarray*}
\Pr{\T_{tent}>N(n-1)}
  &\leq& (d+1)(1-\wp)^N \\
  &\leq& (d+1)\exp\left(-\frac{Nn}{(2\gamma(d+1))^{n-1}}\right)
\end{eqnarray*}
and so when $N=(2\gamma(d+1))^{n-1}n^{-1}\ln(2\gamma(d+1)^2)$ then this shows
$$
\Pr{\T_{tent}\geq (2\gamma(d+1))^{n-1}\ln(2\gamma(d+1)^2)}\leq \frac{1}{2\gamma(d+1)}
$$

The remaining calculations are not specific to the base $2$ case and so they carry through smoothly,
leading to a nearly uniform intersection time of
$$
T\left(\frac{2}{d+1}\right) = 2(2\gamma(d+1))^{n+3}\ln n
$$

To extend Lemma \ref{lem:B_epsilon-kangaroo} replace $2^k$ by $n^k$ throughout,
use $M=O\left((d+1)^{n+3}\right)$, and bound large powers of $\P$ in terms of $\P^{2n+8}$ instead of $\P^{12}$.
This results in $B_\epsilon=\Theta(1/(d+1))$ again.

The proof of Theorem \ref{thm:main} carries through with only obvious adjustments.
\end{proof}


\section*{Acknowledgements}

The authors thank Dan Boneh for encouraging them to study the Kangaroo method,
and John Pollard for several helpful comments.


A preliminary version appeared in the Proceedings of $41^{\textrm{st}}$ ACM Symposium on Theory of Computing (STOC 2009).
This journal version extends the notion of nearly uniform intersection time to the setting of stopping times,
contains simplifications which led to sharper results,
and on the application side the solution to Pollard's conjecture given in Section \ref{sec:generalize} is entirely new.


\end{document}